\newtheorem{thm}{Theorem}
                                             \newtheorem*{TheoremA}{Main Theorem}
\newtheorem{lem}{Lemma}
\newtheorem{prop}{Proposition}
{\theoremstyle{remark}
\newtheorem {Rem}{Remark}}
{\theoremstyle{definition}
\newtheorem{example}{Example}}
\def\div{\mathrm{div}}
\def\fchar{\mathrm{char}}
                                                   \newcommand{\CC}{\mathcal C}
\newcommand{\vf}{\varphi}
\newcommand{\eps}{\varepsilon}
\newcommand{\di}{\textrm{div}}
\newcommand{\Di}{\textrm{Div}}
                                             \newcommand{\alb}{\textit{alb}}
\newcommand{\beq}{\begin{equation}}
\newcommand{\eeq}{\end{equation}}
\newcommand{\BQ}{\mathbb{Q}}
\newcommand{\Eta}{\mathrm{H}}
                                     \newcommand{\bea}{\mathbf a}
                                     \newcommand{\beb}{\mathbf b}
                                      \newcommand{\bec}{\mathbf c}
                                       \newcommand{\bed}{\mathbf d}
\title[Torsion Points of small order]{Torsion points of small order on cyclic covers of $\mathbb P^1$}
\author {Boris M. Bekker}
\address{ St.Petersburg State University, 7/9 Universitetskaya nab., St. Petersburg, 199034 Russia.}
\email{ bekker.boris@gmail.com}
\author {Yuri G. Zarhin}
\address{Pennsylvania State University, Department of Mathematics, University Park, PA 16802, USA}
\email{zarhin@math.psu.edu}
\thanks{The second named author (Y.Z.) was partially supported by Simons Foundation Collaboration grant   \# 585711  and the Travel Support for Mathematicians Grant MPS-TSM-00007756 from the Simons Foundation.}
\begin{document}
\begin{abstract}
Let $d\geq 2$ be a positive integer, $K$  an algebraically closed field of characteristic not dividing $d$, and $n\geq d+1$ a positive integer prime to $d$, $f(x)\in K[x]$ a degree $n$ monic polynomial without repeated roots, $C_{f,d}: y^d=f(x)$ the corresponding smooth plane affine curve over $K$, and $\mathcal{C}_{f,d}$ a smooth projective  model of $C_{f,d}$. Let $J(\mathcal{C}_{f,d})$ be the Jacobian of $\mathcal{C}_{f,d} $. We identify $\mathcal{C}_{f,d}$ with the image of its canonical embedding into $J(\mathcal{C}_{f,d})$ (such that the infinite point of $\mathcal{C}_{f,d}$ goes to the zero of the group law on $J(\mathcal{C}_{f,d})$).  Earlier the second named author proved that
 if $d=2$ and $n=2g+1 \ge 5$,
 then  the genus $g$ hyperelliptic  curve $\mathcal{C}_{f,2}$  contains no torsion  points  of orders  lying between $3$ and $n-1=2g$.

In the present paper we generalize this result to the case of arbitrary $d$. Namely,
we prove that if $P$ is a torsion point of order $m>1$ on $\mathcal{C}_{f,d}$, then either $m=d$ or $m\geq n$. We also describe all  curves $\mathcal{C}_{f,d}$ having a torsion point of order $n$.

\end{abstract}

\subjclass[2010]{14H40, 14G27, 11G10, 11G30}

\keywords{Cyclic covers, Jacobians, torsion points}
\dedicatory{Dedicated to George Andrews and Bruce Berndt for their 85th birthdays}
\maketitle

\section{Introduction}
\label{introd}
Let $K$ be an algebraically closed field, and $d$ and $n$ are positive integers. In what follows we assume that
$$ 2\leq d\leq n-1, \quad (\fchar(K), d)=1, \quad (n,d)=1.$$
We write $\mu_d$ for the multiplicative (order $d$) cyclic group of $d$th roots of unity in $K$.
Let $$f(x)=\prod_{j=1}^n(x-w_j)\in K[x]$$
be a degree $n$ monic polynomial, where its roots
$w_1,\ldots,w_n$ are distinct elements of $K$, and $C_{f,d}: y^d=f(x)$ is a smooth plane affine curve over $K$.

 The projective closure $\bar C_{f,d}$  has exactly  one point at infinity, which may be singular. We  denote it by $\infty$. The group  $\mu_d$ 
 acts on $\bar C_{f,d}$ by sending each point $(x,y)$ of $C_{f,d}$ to $(x,\gamma y)$, where $\gamma \in \mu_d$, and fixing $\infty$.
Let $\CC_{f,d}$ be the normalization of $\bar C_{f,d}$, which is a smooth projective model of $\bar C_{f,d}$. We have a map  $\phi:\CC_{f,d}\to \bar C_{f,d}$  which yields an isomorphism between $C_{f,d}$ and $\phi^{-1}(C_{f,d})$.
Since $(n,d)=1$ it is known \cite{Gal} that $\phi^{-1}(\infty)$ contains only one point $O$, and so the map $\phi$ is bijective. In what follows we identify the points of $\CC_{f,d}$ with the corresponding points of $\bar C_{f,d}$. In particular, slightly abusing notation, we  will write $\infty$ for $O$.
The action of $\mu_d$ on $\bar C_{f,d}$ yields an action of $\mu_d$ on $\CC_{f,d}$  such that
the quotient
$\CC_{f,d}/\mu_d$ is isomorphic to the projective line $\mathbb P^1$. We have a cover $\CC_{f,d}\to \mathbb P^1$ of degree $d$ with $n+1$ ramification points. By the Hurwitz formula,  the genus $g$ of $\CC_{f,d}$ is $(n-1)(d-1)/2$.

Let  $J(\CC_{f,d})$ be the Jacobian of $\CC_{f,d}$, which is a $g$-dimensional abelian variety over $K$. There is a canonical $K$-regular embedding $\alb_O:\CC_{f,d} \to J(\CC_{f,d})$ that sends $O$ to the zero of the group law on $J(\CC_{f,d})$   and each point $P \in \CC_{f,d}(K)$ to the linear equivalence class of the divisor $(P)-(O)$. We identify $\CC_{f,d}$ with its image in $J(\CC_{f,d})$. If $m>1$ is an integer, then by
 an $m$-packet on $\CC_{f,d}$ we mean a nontrivial $\mu_d$-orbit consisting of $m$-torsion $K$-points of $\CC_{f,d}$. (Each such packet consists of $d$ distinct points.)

A celebrated theorem of Raynaud (Manin--Mumford conjecture) implies that if $g>1$ and $\fchar (K)=0$, then the set of torsion points in $\CC_{f,d}(K)$ is finite \cite{Raynaud}. (This assertion does not hold in prime characteristic: e.g., if $K$ is an algebraic closure of a finite field, then $\CC_{f,d}(K)$ is an infinite set that consists of torsion points; it also does not hold if $\fchar (K)=0$ and $g=1$, in which case $\CC_{f,d}(K)$ has a point of any prescribed order.) The importance of  determining explicitly the finite set occurring   in Raynaud's theorem
for specific curves was stressed  by R. Coleman,  K. Ribet, and other authors (for more details, see \cite{Ribet}, \cite{Tsermias}).

In particular, it is  natural  to ask which numbers can be the orders of torsion points in $\CC_{f,d}(K)\subset J(K)$.
More precisely:
\begin{itemize}\item
    does  there exist (for   given $n, d$, $K$, and a positive integer $m>1$) a  curve $\CC_{f,d}$ over $K$ containing a torsion point of order $m?$

    \item if such a curve exists, then how many   points of order $m$ it may contain?
  \end{itemize}

If $\fchar(K)\ne 2$ and $d=2$, then $\CC_{f,d}=\CC_{f,2}$ is  a  {\sl hyperelliptic} curve of genus  $g=(n-1)/2$, and $O$ is  one of the {\sl Weierstrass points} of $\CC_{f,2}$;
it is well known that the torsion points of order $2$ in $\CC_{f,d}(K)$ are  the remaining (different from $O$) $(2g+1)$ Weierstrass points on $\CC_{f,d}(K)$.

  It was proven by J. Boxall and D. Grant in \cite{Box1} that
for $g=2$ there are no points of order $3$ or $4.$

  The second named author proved in   \cite[Theorem 2.8]{Zarhin} that   $\mathcal \CC_{f,2}(K)$  does {\sl not} contain a point of order $m$ if $g \ge 2$ and
 $3\leq m\leq 2g=n-1$.
It is also natural to describe (parametrize)  curves $\CC_{f,d}$ having a point  of a given order on its Jacobian. The first nontrivial case for $d=2$ was considered in \cite{BZ}.
 In the case of $g=2$ such  a study  was done by
 J. Boxall, D. Grant, and F. Lepr\'evost
    in \cite{Box2}, where  a  classification (parameterization) of the genus 2 curves  (up to an isomorphism)  with torsion points of order 5 over algebraically closed fields  was given.  In particular, it was proven in \cite{Box2} that if $\fchar(K)=5$,
     then $\mathcal C(K)$ contains at most 2 points of order 5. These results were generalized by the authors in \cite{BZ} to the case of hyperelliptic curves of arbitrary genus $g \ge 2$
     with torsion points of order $2g+1$.
     In particular, we proved that  if $\fchar(K)=2g+1$,
     then $\mathcal C(K)$ contains at most 2 points of order $2g+1$.

     Notice that the torsion on "generic" superelliptic curves was explicitly described by V. Arul \cite[Sect. 5.3]{Arul}.

Here are  main results of the paper. (If $d=2$, then  the curve $\CC_{f,d}=\CC_{f,2}$ is hyperelliptic and the
  assertions (i) and (ii)(3)  of Main Theorem
 below  become  \cite[Th. 2.8]{Zarhin} and \cite[Th. 5]{BZ} respectively.)

\begin{TheoremA}
\label{TheoremA}
Let $\CC_{f,d}$ be a smooth projective model of the smooth plane affine curve $C_{f,d}: y^d=f(x)$, where $f(x)$ is a monic polynomial of degree $n$ without  repeated roots.
Assume that  $ 2\leq d< n$ and
$$(n,d)=1, \quad (\fchar(K), d)=1.$$
\begin{itemize}
\item[(i)]
  Let $P$ be a point of order $m>1$ on $\CC_{f,d}$. Then either $m=d$ or $m\geq n$.
\item[(ii)]
Suppose that  one of the following conditions holds.
\begin{enumerate}
\item[(1)]
$\fchar(K)=0$ and $d$ does not divide $n-1$;
\item[(2)]
$\fchar(K)=0$,  $d=n-1$, and $n\geq5$;
\item[(3)]
$\fchar(K)$ is a prime $p$ and $n$ is a power of $p$.
\end{enumerate}
Then there is at most one $n$-packet on $\CC_{f,d}$, i.e., the number of points of order $n$ on  $\CC_{f,d}$
is either $0$ or $d$.
\item[(iii)]
Let $n=4$ and $d=n-1=3$.  Suppose that $\fchar(K) \ne 2,3$.
Then there are precisely two monic quartic polynomials $f(x) \in K[x]$ without repeated roots
such that all the $K$-points on  the curve  $\CC_{f,3}$ with $x$-coordinates $0$ and $-1$
have order $4$. Namely,
$$f(x)=x^4+2\left(x+\frac{3+\sqrt{3}}{6}\right)^3=(x+1)^4+2 \left(-x+\frac{-3+\sqrt{3}}{6}\right)^3.$$
Two choices of $\sqrt{3} \in K$ give us two polynomials $f(x)$.
\end{itemize}
\end{TheoremA}

The paper is organized as follows. Section \ref{prel} contains useful elementary results about divisors on $\CC_{f,d}$.   In Section \ref{rest} we discuss possible ``small'' orders of torsion points on $\CC_{f,d}$, actually proving  the first assertion of Main Theorem (Theorem \ref{thm1}). In particular, we classify all points of order $d$.
In Section \ref{orderN}  we discuss torsion points of order $n$, give examples. In Section \ref{picard} we discuss torsion points of order $n=4$ when $d=3$
(i,e., when the curve $\CC_{f,3}$ is a Picard curve of genus 3)  and prove Main Theorem.

{\bf Acknowledgements} We are grateful to the referee, whose thoughtful comments helped to improve the exposition.

\section{Preliminaries}
\label{prel}
Let $K$ be an algebraically closed field. Let $f(x)=\prod_j(x-w_j)$ be a separable degree $n$ polynomial over $K$, and $C_{f,d}: y^d=f(x)$ a smooth plane affine curve over $K$, where $ 2\leq d\leq n-1$ and $(\fchar (K), d)=1$. Let $\CC_{f,d}$ be a smooth projective model of $C_{f,d}$.
Denote by $\zeta$  a primitive  $d$-th root of 1 and by $\omega$ the regular map $C_{f,d}\to C_{f,d}$ that sends a point $P=(a,c)$ to $\omega P=(a,\zeta c)$ and fixes $\infty$.  Clearly $\omega$ induces an action of  $\langle \omega\rangle$ on $\CC_{f,d}$. By linearity
we obtain an action of $\langle \omega\rangle$ on the divisors of $\CC_{f,d}$. We note that for an arbitrary function $F$ on $\CC_{f,d}$ we have
\beq\label{aut}\div  (\omega^{\ast}(F))=\omega(\div   F),\eeq
where $\omega^{\ast}$ is the map corresponding to $\omega$ on the function field of $\CC_{f,d}$.
The map $\omega$ is an automorphism of the divisor group $\Di(\CC_{f,d})$ respecting the linear equivalence.

  \begin{lem}\label{l1}
Let $F(x)=\prod_j(x-a_j)^{c_j}$, where $a_j\in K$ are distinct,  $c_j$ are positive integers,
 be an arbitrary nonconstant polynomial in $K[x]$
of positive degree
$$\deg(F)=\sum_j c_j.$$
 Let $b_j\in K$ be such that  $f(a_j)=b_j^d$.
 Let us consider $F(x)$ as the rational function on $\CC_{f,d}$.
Then
 $$\div(F(x))=\sum\limits_{j}c_j\left(\left(\sum_{i=0}^{d-1}(\omega^i P_j)\right)-d(\infty) \right),$$ where $P_j=(a_j,b_j)$ is a point of $\CC_{f,d}$.
 In particular, $\infty$ is the only pole of  $F(x)$  and its multiplicity is $d \deg(F)$, which is divisible by $d$.
 \end{lem}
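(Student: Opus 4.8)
The plan is to exploit the $\mZ$-linearity of the divisor map to reduce to a single linear factor. Writing $F(x)=\prod_j(x-a_j)^{c_j}$ one has $\div(F(x))=\sum_j c_j\,\div(x-a_j)$, so it suffices to prove that for each $a\in K$, choosing $b\in K$ with $b^d=f(a)$ and setting $P=(a,b)\in\CC_{f,d}(K)$,
\[
\div(x-a)=\sum_{i=0}^{d-1}(\omega^i P)-d\,(\infty).
\]
Summing $c_j$ copies of this identity (with $a=a_j$, $b=b_j$, $P=P_j$) then gives the asserted formula, and its negative part shows that $\infty$ is the only pole of $F(x)$, of multiplicity $d\sum_j c_j=d\deg(F)$.

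To prove the displayed identity I would first analyse the behaviour at $\infty$. The function $x-a$ is a polynomial in the coordinate $x$, hence regular on the affine curve $C_{f,d}=\CC_{f,d}\setminus\{\infty\}$, so its only possible pole is $\infty$. Let $e$ be the ramification index at $\infty$ of the degree-$d$ morphism $x\colon\CC_{f,d}\to\mathbb P^1$; since $1/x$ uniformizes $\mathbb P^1$ at $\infty$ we have $\mathrm{ord}_\infty(x)=-e$. Substituting into $y^d=f(x)$ with $\deg f=n$ gives $d\cdot\mathrm{ord}_\infty(y)=\mathrm{ord}_\infty(f(x))=-ne$, whence $d\mid ne$; as $(n,d)=1$ this forces $d\mid e$, and together with $1\le e\le d$ we get $e=d$ (incidentally reproving that $\infty$ is the unique point over $\infty\in\mathbb P^1$). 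Hence $\mathrm{ord}_\infty(x-a)=-d$.

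It remains to count the zeros, which all lie on $C_{f,d}$ and are exactly the points with first coordinate $a$. When $f(a)\ne 0$ these are the $d$ distinct points $\omega^i P$, $0\le i\le d-1$; since the fibre of $x$ over $a$ then consists of $d$ distinct points, every ramification index there is $1$, so each $\omega^i P$ is a simple zero of $x-a$. When $a$ is a root of $f$, one has $b=0$, the points $\omega^i P$ all coincide with $P=(a,0)$, which is the unique point over $a$, so $x$ is totally ramified there and $\mathrm{ord}_P(x-a)=d$. In either case the zero divisor of $x-a$ is $\sum_{i=0}^{d-1}(\omega^i P)$, of degree $d$, and this must balance the pole order $d$ at $\infty$ because $\deg\div(x-a)=0$; this establishes the identity. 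I expect the only step requiring real care to be the equality $\mathrm{ord}_\infty(x)=-d$, i.e.\ the total ramification of $\infty$ over $\mathbb P^1$, but the coprimality of $n$ and $d$ makes the argument short; alternatively one can invoke $\CC_{f,d}/\mu_d\cong\mathbb P^1$ with $\infty$ a $\mu_d$-fixed point together with tameness of the cover.
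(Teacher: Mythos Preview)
Your proof is correct and follows essentially the same route as the paper: reduce by $\mZ$-linearity to computing $\div(x-a)$, then handle separately the unramified fibre ($f(a)\ne 0$) and the totally ramified fibre ($f(a)=0$). The paper phrases the zero computation via explicit local parameters ($x-a$ when $b\ne 0$, $y$ when $b=0$) rather than ramification indices, and takes the pole order $d$ at $\infty$ as known from the preliminaries, but these are cosmetic differences.
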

 \begin{proof}
If $b_j\neq0$, then the function $x-a_j$ is a local parameter at the points $\omega^i P_j$,  and
\beq
\label{eq1}
\div (x-a_j)= \left(\sum_{i=0}^{d-1}(\omega^iP_j)\right)-d(\infty).
\eeq
If $b_j=0$, then $P_j= \omega^iP_j$ for each $i$, and  $y$ is a local parameter at $P_j$. Since $$x-a_j=y^d\frac{x-a_j}{f(x)}$$ has a zero of multiplicity $d$ at $P_j$, equation \eqref{eq1} is  valid also in this case, which implies the statement of the lemma.

\end{proof}

\begin{lem}\label{l2}
Let $D$ be an arbitrary divisor of degree 0 on $\mathcal C_{f,d}$. Then the divisor $\sum\limits_{i=0}^{d-1}\omega^iD$ is principal.
\end{lem}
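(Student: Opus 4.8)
The plan is to reduce, by additivity, to the case $D=(P)-(Q)$ for two $K$-points $P,Q$ of $\CC_{f,d}$, and then to write down an explicit rational function whose divisor is $\sum_{i=0}^{d-1}\omega^i D$, using Lemma~\ref{l1}.

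First I would observe that both $D\mapsto\sum_{i=0}^{d-1}\omega^i D$ and $\div$ are additive, and that a finite sum of principal divisors is principal. Since every divisor of degree $0$ on $\CC_{f,d}$ is a $\mathbb{Z}$-linear combination of divisors of the form $(P)-(Q)$ (fix a base point $O$ and write $\sum_P n_P(P)=\sum_P n_P\big((P)-(O)\big)$ when $\sum_P n_P=0$), it is enough to prove the lemma when $D=(P)-(Q)$.

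Next I would split into cases according to whether $P$ or $Q$ is the point $\infty$. If $P=(a,b)$ and $Q=(a',b')$ are both affine, then applying Lemma~\ref{l1} to the linear polynomials $x-a$ and $x-a'$ (noting that the $\langle\omega\rangle$-orbit of a point is exactly the fibre of the $x$-coordinate over that point's $x$-value, independently of the chosen $y$-coordinate) gives
\[
\div(x-a)=\sum_{i=0}^{d-1}(\omega^i P)-d(\infty),\qquad \div(x-a')=\sum_{i=0}^{d-1}(\omega^i Q)-d(\infty),
\]
and subtracting yields $\div\!\left(\frac{x-a}{x-a'}\right)=\sum_{i=0}^{d-1}\omega^i D$, which is therefore principal. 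If exactly one of $P,Q$ is $\infty$, say $Q=\infty$, then $\omega^i\infty=\infty$ for all $i$, so $\sum_{i=0}^{d-1}\omega^i D=\sum_{i=0}^{d-1}(\omega^i P)-d(\infty)=\div(x-a)$; the case $P=\infty$ is symmetric, and if $P=Q=\infty$ then $D=0$.

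There is essentially no obstacle here; the only thing that requires a little care is the bookkeeping at $\infty$ and at the branch points $w_j$, but that is already encoded in Lemma~\ref{l1}. Alternatively, one can argue conceptually: the $x$-map $\pi\colon\CC_{f,d}\to\CC_{f,d}/\langle\omega\rangle\cong\mathbb P^1$ has the $\langle\omega\rangle$-orbits as fibres, and $\sum_{i=0}^{d-1}(\omega^i P)=\pi^{\ast}(\pi(P))$ for every $P$ (using that $\pi$ is totally ramified over $\infty$ and over each $w_j$ because $(n,d)=1$); hence $\sum_{i=0}^{d-1}\omega^i D=\pi^{\ast}(E)$ for some divisor $E$ on $\mathbb P^1$ with $\deg E=\deg D=0$, and, every degree-$0$ divisor on $\mathbb P^1$ being principal, writing $E=\div(h)$ gives $\sum_{i=0}^{d-1}\omega^i D=\div(h\circ\pi)$, which is principal.
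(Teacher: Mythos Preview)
Your proposal is correct and your first argument is essentially the paper's own proof: the paper fixes the base point $\infty$ from the outset, writes $D=\sum_j c_j\big((P_j)-(\infty)\big)$, and applies Lemma~\ref{l1} termwise to obtain $\sum_i\omega^i D=\div\prod_j(x-a_j)^{c_j}$. Your alternative conceptual argument via $\pi^{\ast}$ from $\mathbb{P}^1$ is a pleasant addition not in the paper, but the main line is the same.
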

\begin{proof}
Let $D=\sum_jc_j(P_j)$
be a divisor of degree 0 on $\mathcal C_{f,d}$.
Since $\sum_jc_j=0$ we have
$$D=\sum\limits_jc_j(P_j)-\sum\limits_jc_j(\infty)=
\sum\limits_jc_j\left((P_j)-(\infty)\right).$$
If some $P_j$ is $\infty$, the corresponding term can be omitted  and we may assume that $P_j\neq \infty$ for all $j$.
Then
$$\begin{aligned}\sum_{i=0}^{d-1}\omega^iD &=\sum_{i=0}^{d-1}\omega^i\sum\limits_jc_j\left((P_j)-(\infty)\right)
 =\sum_{i=0}^{d-1}\sum\limits_jc_j\left(\omega^i(P_j)-\omega^i(\infty)\right)\\
&=\sum\limits_{j}c_j\left(\sum_{i=0}^{d-1}(\omega^iP_j)-n(\infty) \right)=\sum\limits_{j}c_j \div(x-a_j)=
\div F(x),\end{aligned}$$ where
$P_j=(a_j,b_j)$ and $F(x)$ is as in Lemma \ref{l1}.
\end{proof}

\section{Restrictions on the torsion on $\CC_{f,d}$}
\label{rest}

Let $P$ be a point of order $m>1$ on $\mathcal C_{f,d}$. Assume that $m\leq n$. We have   \beq\label{order}m(P)-m(\infty)=m((P)-(\infty))=\div(h), \eeq
where $h$ is a rational function on $\mathcal C_{f,d}$ that is {\sl not} a constant.
The point $\infty$ is a single pole of $h$ and it has multiplicity $m$. Therefore $h$ is a non-constant polynomial in $x,y$. Since $y^d=f(x)$, where $f(x)$ is a polynomial of degree $n$,  $h$ can be represented by a polynomial of the form
\beq \label{1} h=s_1(x)y^{d-1}+s_2(x)y^{d-2}+\cdots+s_{d-1}(x)y-v(x), \,\text {where}\eeq
$$s_1(x),s_2(x),\ldots, s_{d-1}(x),v(x)\in K[x].$$

\begin{Rem}
\label{hEQv}
Let us assume that all $s_1,s_2,\ldots, s_{d-1}$ are zero, i.e.,
$$h=-v(x),$$
where $v(x)$ is a non-constant polynomial in $x$.
Since  the only pole of the rational function $x$ on $\mathcal C_{f,d}$ is $\infty$
and its multiplicity is $d$,
 we get
\beq \label{s_i=0} m=d\deg (v),\eeq
hence $m$ is divisible by $d$.
Since $d\leq m\leq n$, we have
\beq\label{degree v1}
 1\leq\deg (v)\leq \frac{n-1}{d}
\eeq
due to $(n,d)=1$.
\end{Rem}
\begin{Rem}
\label{hNEQv}
Let us assume that at least one of the polynomials $s_1,s_2,\ldots, s_{d-1}$ is nonzero.
We write equation \eqref{1} in the form
\beq \label{2} h=s_{\alpha_1}(x)y^{d-\alpha_1}+s_{\alpha_2}(x)y^{d-\alpha_2}+\cdots+s_{\alpha_k}(x)y^{d-\alpha_k}-v(x), \,\text {where}\eeq
all polynomials $s_{\alpha_i}(x)$ are nonzero and all integers $\alpha_i$ are {\sl distinct positive} and less than $d$.
In particular,
\begin{itemize}
\item[(i)]
Every $\alpha_i$ and $n \alpha_i$ are not divisible by $d$;
\item[(ii)]
all the differences $d-\alpha_i$ are distinct positive integers that are {\sl not} divisible by $d$;
\item[(iii)]
 all
the residues $\alpha_i \ (\bmod \ d)$ are distinct and do not coincide with $0 \ (\bmod \ d)$, which implies that  all
the residues $-n\alpha_i \ (\bmod \ d)$ are distinct and do not coincide with $0 \ (\bmod \ d)$. (Here and in (i) we use that
$(d,n)=1$.)
\end{itemize}
\end{Rem}
\begin{lem}\label{distinctorders}
We keep the assumption of Remark \ref{hNEQv}.
\begin{itemize}
\item[(a)]
If $v(x)$ is not a constant then
all summands on the right-hand side of \eqref{2} have poles  at $\infty$ with distinct positive multiplicities.
\item[(b)]
If $v(x)$ is a constant then all summands on the right-hand side of \eqref{2} except $v(x)$ have poles  at $\infty$ with distinct positive multiplicities.
\end{itemize}
\end{lem}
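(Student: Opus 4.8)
The plan is to compute the order of the pole at $\infty$ of each summand of \eqref{2} explicitly and then separate these orders using the coprimality conditions collected in Remark \ref{hNEQv}. First I would record the two input facts about orders at $\infty$. By Lemma \ref{l1}, the function $x$ has $\infty$ as its only pole, of multiplicity $d$, and $f(x)$ has $\infty$ as its only pole, of multiplicity $dn$; since $y^d=f(x)$ and the order of a pole is multiplicative under taking $d$-th powers, $y$ has $\infty$ as its only pole, of multiplicity exactly $n$. Consequently, for a nonzero polynomial $s(x)\in K[x]$ and an integer $k\ge 0$, the function $s(x)y^k$ is regular on the affine curve $C_{f,d}$ and has at $\infty$ a pole of multiplicity $d\deg(s)+nk$.

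Next I would apply this to the individual terms of \eqref{2}. For each $i$ the summand $s_{\alpha_i}(x)y^{d-\alpha_i}$ has at $\infty$ a pole of multiplicity
\[
N_i:=d\deg(s_{\alpha_i})+n(d-\alpha_i),
\]
and since $1\le \alpha_i\le d-1$ we have $d-\alpha_i\ge 1$, hence $N_i\ge n>0$; this gives positivity. For distinctness, if $N_i=N_j$ with $i\ne j$, then $d(\deg s_{\alpha_i}-\deg s_{\alpha_j})=n(\alpha_i-\alpha_j)$, and $(n,d)=1$ forces $d\mid(\alpha_i-\alpha_j)$, which is impossible because $0<|\alpha_i-\alpha_j|<d$. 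This already settles part (b), where $v(x)$ is constant and hence contributes no pole at $\infty$.

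For part (a) it remains to compare the $N_i$ with the pole multiplicity of $v(x)$ when $v$ is nonconstant. In that case $v(x)$ has $\infty$ as its only pole, of multiplicity $d\deg(v)\ge d$, which is divisible by $d$. On the other hand $N_i\equiv n(d-\alpha_i)\equiv -n\alpha_i\pmod d$, and this residue is nonzero by Remark \ref{hNEQv}(iii) (again using $(n,d)=1$ together with $\alpha_i\not\equiv 0\pmod d$). Hence $d\deg(v)\ne N_i$ for every $i$, so all $k+1$ summands on the right-hand side of \eqref{2} have pairwise distinct positive pole multiplicities at $\infty$, as asserted.

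I do not expect a genuine obstacle here: once the pole orders of $x$ and $y$ at $\infty$ are pinned down to be exactly $d$ and $n$, everything reduces to elementary arithmetic with the congruences recorded in Remark \ref{hNEQv}. The only point demanding a little care is the justification that $\mathrm{ord}_\infty\!\big(s(x)y^k\big)=d\deg(s)+nk$ with no cancellation — which is precisely what Lemma \ref{l1} and the relation $y^d=f(x)$ provide, since the exponents of $y$ occurring in a single such term are all equal.
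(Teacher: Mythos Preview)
Your proof is correct and follows essentially the same route as the paper: both compute the pole order at $\infty$ of each summand as $N_i=d\deg(s_{\alpha_i})+n(d-\alpha_i)$, observe that $N_i\not\equiv 0\pmod d$ while $d\deg(v)\equiv 0\pmod d$, and use $(n,d)=1$ together with the distinctness of the $\alpha_i$ modulo $d$ to separate the $N_i$ from one another. The only cosmetic difference is that the paper invokes Remark~\ref{hNEQv}(iii) directly to say the residues $-n\alpha_i\pmod d$ are distinct, whereas you rederive this by the divisibility argument $N_i=N_j\Rightarrow d\mid(\alpha_i-\alpha_j)$; these are the same observation.
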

\begin{proof}
Since the function $x$ has at $\infty$ a pole of multiplicity $d$ and the function $y$ has at $\infty$ a pole of multiplicity $n$, the
multiplicity of the pole of the  function  $s_{\alpha_i}(x)y^{d-\alpha_i}$ at $\infty$ equals to the {\sl positive integer}
\beq
\label{polemult}
d\deg (s_{\alpha_i})+n(d-\alpha_i) \equiv -n \alpha_i \ (\bmod \ d) \not\equiv 0  \ (\bmod \ d).
\eeq
Recall (Remark \ref{hNEQv}(iii)) that all the residues $-n \alpha_i \ (\bmod \ d)$ are distinct and different from $0 \ (\bmod \ n)$. Hence,
all   $d\deg (s_{\alpha_i})+n(d-\alpha_i)$ are {\sl distinct}  integers that are {\sl not} divisible by $d$; in addition,
\begin{equation}
\label{ndA}
d\deg (s_{\alpha_i})+n(d-\alpha_i)\ge n(d-\alpha_i)\ge n\cdot 1=n.
 \end{equation}
On the other hand, either $v(x)$ is a constant or it has  at $\infty$ a pole of positive multiplicity $d \deg(v)$,
which is divisible by $d$. This implies that the multiplicity of the pole $\infty$ of $v(x)$ is divisible by $d$ and therefore does {\sl not} coincide
with any of
$$d\deg (s_{\alpha_i})+n(d-\alpha_i) \not\equiv 0 \ (\bmod \ d),$$
which ends the proof.
\end{proof}

\begin{lem}
\label{rem}
If
$P$ is a point of order $m\le n$ on $\mathcal C_{f,d}$ and
 $m>1$ is not divisible by $d$, then $m=n$  and $m$ coincides with the multiplicity of $\infty$ as a pole of a function $y-v(x)$,
where
$$\begin{aligned} v(x) &\in K[x],  \ \deg (v)\leq \frac{n-1}{d}, \\ \div(y-v(x))= m(P)-m(\infty)=& m((P)-(\infty)).\end{aligned}$$
\end{lem}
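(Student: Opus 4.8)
The plan is to build on the representation already in place just before the statement. By \eqref{order} we have $m((P)-(\infty))=\div(h)$ with $h$ a non\-constant rational function whose only pole is $\infty$, of multiplicity $m$; hence $h$ is a polynomial in $x,y$ of the form \eqref{1}. The first step is the dichotomy of Remarks \ref{hEQv}, \ref{hNEQv}: if all of $s_1,\dots,s_{d-1}$ vanish, then $h=-v(x)$ and \eqref{s_i=0} gives $d\mid m$, contrary to the hypothesis that $d\nmid m$. So at least one $s_i$ is nonzero, and we may pass to the normalized form \eqref{2}.

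The core of the argument is the ``distinct pole orders'' property supplied by Lemma \ref{distinctorders}. Each term $s_{\alpha_i}(x)y^{d-\alpha_i}$ has a pole at $\infty$ of multiplicity $d\deg(s_{\alpha_i})+n(d-\alpha_i)\ge n$ by \eqref{ndA}, and by Lemma \ref{distinctorders} these multiplicities are pairwise distinct and distinct from the pole order of $v(x)$ (which is either $0$ or a positive multiple of $d$, hence $\ne n$ since $(n,d)=1$). Because a $K$\-linear combination of functions having pairwise distinct pole orders at a point has pole order there equal to the largest of them (the ultrametric property of the valuation $\mathrm{ord}_\infty$), the pole order $m$ of $h$ equals the maximum of the pole orders of the nonzero summands; in particular $m\ge n$. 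Combined with the hypothesis $m\le n$, this forces $m=n$.

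Next I would pin down $h$ precisely. The $y$\-terms in \eqref{2} have pairwise distinct pole orders, each $\ge n$; if two or more were present the maximum (which equals $m$) would be strictly larger than $n$. Hence there is exactly one $y$\-term, and its pole order is exactly $n$. By \eqref{ndA}, the equation $d\deg(s_{\alpha_1})+n(d-\alpha_1)=n$ with $d-\alpha_1\ge 1$ forces $\deg(s_{\alpha_1})=0$ and $\alpha_1=d-1$, so $h=cy-v(x)$ with $c\in K^{\ast}$. Replacing $v(x)$ by $c^{-1}v(x)$ does not change $\div(h)$, and we obtain $\div(y-v(x))=m(P)-m(\infty)=m((P)-(\infty))$ with $m=n$.

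Finally, for the degree bound I would once more read off the order of $y-v(x)$ at $\infty$: the summand $y$ contributes $-n$ and $v(x)$ contributes $-d\deg(v)$, and since $(n,d)=1$ these are distinct whenever $v$ is non\-constant, so $\mathrm{ord}_\infty(y-v(x))=-\max(n,d\deg(v))$; this must equal $-m=-n$, whence $d\deg(v)<n$, i.e.\ $\deg(v)\le (n-1)/d$ (the bound being trivial when $v$ is constant). The only place requiring care is the valuation\-theoretic fact that the pairwise distinct pole orders of the summands of \eqref{2} rule out any cancellation at $\infty$ — but that is exactly what Lemma \ref{distinctorders} is designed to provide, so no genuine obstacle remains.
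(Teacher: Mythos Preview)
Your proof is correct and follows essentially the same route as the paper's: use Remark~\ref{hEQv} to rule out the case $h=-v(x)$, invoke Lemma~\ref{distinctorders} to see that the pole order $m$ of $h$ is the maximum of the pairwise distinct pole orders of the summands in \eqref{2}, deduce $m=n$ from \eqref{ndA}, and then pin down $h=sy-v(x)$ with $s\in K^\ast$ and $\deg(v)\le (n-1)/d$. The only cosmetic differences are that you make the valuation\nobreakdash-theoretic ``no cancellation'' step explicit and you argue the uniqueness of the $y$\nobreakdash-term via distinctness of the pole orders rather than by directly solving $d\deg(s_{\alpha_i})+n(d-\alpha_i)\le n$ for all $i$ as the paper does; both are equivalent.
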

\begin{proof}
Since $m$ is {\sl not} divisible by $d$,
it follows from Remark \ref{hEQv}
 that we are in the situation of Remark \ref{hNEQv}. Now it follows from  Lemma~\ref{distinctorders} that
\beq\label{3}
m=\max\{d\deg (v), \ \   d\deg (s_{\alpha_i})+n(d-\alpha_i) \ \  \text{ where } 1 \le i \le k \}.
\eeq
Combining \eqref{ndA} and \eqref{3},  we get
$$m \ge d\deg (s_{\alpha_i})+ n(d-\alpha_i) \ge n \quad \forall i.$$
Since $m \le n$, we get $m=n$ and
$$d-\alpha_i=1, \ \   d\deg (s_{\alpha_i})=0 \quad  \forall i.$$
This  means that
$$\alpha_i=d-1, \ \  \text{i.e., } \\ k=1, \ \alpha_1=d-1,$$
$s:=s_1(x)$ is a nonzero constant  and
$$h(x)=s_1(x)y-v(x)=s y-v(x), \quad \div(h)=m(P)-m(\infty)=n(P)-n(\infty).$$
In light of  \eqref{3},
$n=m\ge \deg(v)$, i.e., $\deg(v)\le n/d$.  Since $n$ is {\sl not} divisible by $d$, we get
$$\deg(v) \le \frac{n-1}{d}.$$
Since we are interested in $h$ only up to a nonzero constant, we may
replace $h$ by $s^{-1}h$ and $v(x)$ by $\frac{1}{s}v(x)$ and get
$$h=y-v(x), \quad \div(h)=n(P)-n(\infty),  \quad \deg(v) \le \frac{n-1}{d}.$$
This end the proof.


\end{proof}
\begin{thm}\label{thm1}
Let $\CC_{f,d}$ be a smooth projective model of the smooth plane affine curve $C_{f,d}: y^d=f(x)$, where $f(x)$ is a monic polynomial of degree $n$ without repeated roots.
Assume that  $ 2\leq d< n$ and
$$(n,d)=1, \quad (\fchar(K), d)=1.$$
 Let $P$ be a point of order $m>1$ on $\CC_{f,d}$. Then:
 \begin{itemize}
 \item[(i)]
  Either $m=d$ or $m\geq n$.
  \item[(ii)]
 There are precisely $n$ points of order $d$ on $\CC_{f,d}$, namely,
 $$P_j=(w_j,0) \in C_{f,d}(K)\subset \CC_{f,d}(K) \quad (j=1, \dots, n)$$
 where $w_j \in K$ are the roots of $f(x)$.
 \item[(iii)]
 A point $P=(a,c) \in C_{f,d}(K)\subset \CC_{f,d}(K)$ has order $n$ if and only if
 there exists a polynomial $v(x) \in K[x]$ such that
 $$\deg(v) \le \frac{n-1}{d}, \quad f(x)=(x-a)^n+v^d(x).$$
 \item[(iv)]
 The curve  $\CC_{f,d}$ has a point of order $n$ if and only if there exist $a \in K$
 and a polynomial $v(x) \in K[x]$ such that
 $$\deg(v) \le \frac{n-1}{d}, \quad f(x)=(x-a)^n+v^d(x).$$
 \end{itemize}
\end{thm}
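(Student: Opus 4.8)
The plan is to read off parts (i) and (ii) from the local study of a function $h$ with $\div(h)=m((P)-(\infty))$ that is already set up in Remarks~\ref{hEQv}, \ref{hNEQv} and Lemmas~\ref{distinctorders}, \ref{rem}, and then to prove (iii) — of which (iv) is the evident existential reformulation — by applying the norm operator $h\mapsto\prod_{i=0}^{d-1}\omega^{i\ast}(h)$ of the degree-$d$ extension $K(\CC_{f,d})/K(x)$ to the particular function $y-v(x)$ furnished by Lemma~\ref{rem}. The gain is that this product is a polynomial in $x$ alone, so its divisor is controlled by Lemma~\ref{l1}.

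For (i), I would assume $1<m<n$ and show $m=d$. If $d\nmid m$, then Lemma~\ref{rem} forces $m=n$, a contradiction; so $d\mid m$. Write $h$ as in \eqref{1}. If some $s_{\alpha_i}$ is nonzero, Lemma~\ref{distinctorders} shows that $m$ equals either one of the integers $d\deg(s_{\alpha_i})+n(d-\alpha_i)$ or $d\deg(v)$; the former is impossible since $d\mid m$ while those integers are not divisible by $d$, and in the latter case $m=d\deg(v)$ is the largest of all these numbers, hence $\ge n$ by \eqref{ndA} — contradicting $m<n$. Hence all $s_{\alpha_i}$ vanish, so $h=-v(x)$ with $v\in K[x]$ nonconstant and $\div(v(x))=m((P)-(\infty))$. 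As $\omega^\ast x=x$, the divisor $\div(v(x))$ is $\langle\omega\rangle$-stable by \eqref{aut}; hence its positive part $m(P)$ is $\langle\omega\rangle$-stable, which forces $\omega P=P$. But the affine points fixed by $\omega$ are exactly the $(w_k,0)$, so $P=(w_k,0)$ for a root $w_k$ of $f$, and then Lemma~\ref{l1} gives $\div(x-w_k)=d((P)-(\infty))$; thus the order of $P$ divides $d$, and with $d\mid m$ this yields $m=d$.

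The same argument also settles (ii): since $d<n$, every point of order $d$ is one of the $(w_j,0)$; conversely $\div(x-w_j)=d((w_j,0)-(\infty))$ by Lemma~\ref{l1}, so $(w_j,0)$ has order dividing $d$, hence equal to $d$ by (i), and the $n$ points $(w_j,0)$ are pairwise distinct. For (iii), suppose first that $P=(a,c)$ has order $n$; since $n\ne d$, part (ii) gives $c\ne0$, and Lemma~\ref{rem} supplies $v\in K[x]$ with $\deg(v)\le(n-1)/d$ and $\div(y-v(x))=n((P)-(\infty))$. Using $\omega^{i\ast}(y-v(x))=\zeta^iy-v(x)$, the identity $\prod_{i=0}^{d-1}(T-\zeta^iy)=T^d-y^d$, and \eqref{aut}, one finds
\[
  \prod_{i=0}^{d-1}\omega^{i\ast}\bigl(y-v(x)\bigr)=(-1)^{d-1}\bigl(f(x)-v(x)^d\bigr),
\]
which is a monic polynomial in $x$ of degree $n$ (because $\deg(v^d)\le n-1$) and whose divisor equals $n\sum_{i=0}^{d-1}(\omega^iP)-nd(\infty)$. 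Since $c\ne0$, the positive part here is supported on the $d$-element $\langle\omega\rangle$-orbit of $P$, all of whose points have $x$-coordinate $a$; as orbits with distinct $x$-coordinates are disjoint, Lemma~\ref{l1} forces $f(x)-v(x)^d=(x-a)^n$. Conversely, given $f(x)=(x-a)^n+v(x)^d$ with $\deg(v)\le(n-1)/d$, one has $v(a)\ne0$ (otherwise $(x-a)^d\mid f$ with $d\ge2$, contradicting separability of $f$), the function $y-v(x)$ has unique pole $\infty$ of multiplicity $n$, and its only affine zero is $(a,v(a))$ because $y=v(x)$ on the curve forces $(x-a)^n=0$; hence $\div(y-v(x))=n((a,v(a))-(\infty))$, so $(a,v(a))$ — and likewise each of its $\omega$-conjugates, i.e.\ every point with $x$-coordinate $a$ — has order dividing $n$, which by (i) must be exactly $n$. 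Part (iv) is then the existential reformulation of (iii).

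The hard part will be the global step inside (iii): Lemma~\ref{rem} only pins down the shape $y-v(x)$ of the relevant function and a bound on $\deg v$, and one must upgrade this to the polynomial identity $f=(x-a)^n+v^d$. The mechanism I expect to carry the argument is the norm computation $\prod_i\omega^{i\ast}(y-v(x))=\pm\bigl(f-v^d\bigr)$ together with the elementary observation that $\langle\omega\rangle$-orbits with distinct $x$-coordinates are disjoint, which confines the divisor of $f-v^d$ to a single orbit and forces it to equal $(x-a)^n$; the remaining care is in ruling out the degenerate cases ($c=0$, handled by (ii); $v(a)=0$, handled by separability of $f$).
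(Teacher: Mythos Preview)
Your proof is correct and follows essentially the same approach as the paper's: the case split on $d\mid m$ in (i), the identification of the order-$d$ points via Lemma~\ref{l1}/\ref{l2} in (ii), and in (iii) the product computation $\prod_i(y-\zeta^i v)=y^d-v^d=f-v^d$ (which you frame as a norm) to deduce $f-v^d=(x-a)^n$ are all the paper's arguments, only with slightly different bookkeeping. Your derivation of $v(a)\ne 0$ from separability of $f$ is a small clarification over the paper, which simply assumes it in the converse direction.
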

\begin{proof}

(i)  We consider the following two cases separately.

{\bf Case 1}
The order $m$ is divisible by $d$. Then $m$ is the order of the pole of $v(x)$ at $\infty$.
Now it follows from Lemma \ref{l1} applied to  $F(x)=v(x)$  that if the support of $\div(v(x))$ contains a point $P$, then
it contains all points $\omega^i P$ in the orbit of $P$ under $\langle\omega\rangle$. But $h=v(x)$,  and by \eqref{order} we have
$$\div(v(x))=m(P)-m(\infty).$$
 It follows that $P=\omega P$,
which means that $P=(w_j,0)$ for some $j$. From Lemma \ref{l2} it follows that $$d\left((P)-(\infty)\right)=\sum_{i=0}^{d-1}\omega^i\left((P)-(\infty)\right)$$
is a principal divisor. Hence $P$  is a point of order dividing $d$. But since $m\geq d$, the order of $P$ is exactly $d$.
We have proven that if $P$ is a point of order $m$ divisible by $d$, then $m=d$ and $P=(w_j,0)$ for some $1\leq j\leq n$.

{\bf Case 2}  Now assume that $m$ is not divisible by $d$. Then it follows from Lemma \ref{rem} that $m=n$ if $m\leq n$. 
This ends the proof of (i).

(ii) Let $P_j=(w_j,0)$. We have $P_j=\omega^iP_j$, and by Lemma~\ref{l2} the divisor
$$\sum_{i=0}^{d-1}\omega^i\left((P_j)-(\infty)\right)=d\left((P_j)-(\infty)\right)$$ is principal. It follows that the order $m_j$ of $P_j$ in  $J(\CC_{f,d})$ divides $d$.
If $m_j \ne d$, then $m_j<d<n$ and therefore $m_j$ is {\sl not} divisible by $d$. By Case 2, $m_j=n$, which contradicts the inequality $m_j<n$.
This proves that $m_j=d$, i.e. $P_j$ has order $d$. On the other hand, by Case 1 above, every point of order $d$ on $\CC_{f,d}$ coincides with one of $P_j$.
This proves (ii).

(iii) Suppose $P=(a,c)$ is a point of order $n$ on $\CC_{f,d}$.  Then
$$n(P)-n(\infty)=\di (y-v(x)), \;\text{where} \ v(x) \in K[x], \deg (v)\leq(n-1)/d.$$
Then the zero divisor of $y-v(x)$ coincides with $n(P)$. In particular, $c=v(a)$.
If $i$  is an integer with $0 \le i \le d-1$, then the point $\omega^i P=(a,\zeta^jic)$ also has order $n$. By \eqref{aut} we have
$$n(\omega^i P)-n(\infty)=\div (\zeta^{-i}y-v(x))=\div (y-\zeta^i v(x)).$$
We have
$$y^d-v^d(x)=(y-v(x))(y-\zeta^1 v(x))\cdots(y-\zeta^{d-1}v(x)).$$
It follows that
$$\begin{aligned}\div(f(x)-v^d(x))=\div(y^d-v^d(x))\\=n\left((P)+(\omega P)+\cdots+(\omega^{d-1}P)\right)-dn(\infty).\end{aligned}$$
Therefore
$f(x)-v^d(x)$ is a polynomial with a single root
$$a=x(P)=x(\omega P)= \dots =x(\omega^{d-1}P)\in K.$$
Since $\deg (v)\leq (n-1)/d$,  the polynomial $f(x)-v^d(x)$ is monic and has degree $n$.
Consequently
$f(x)-v^d(x)=(x-a)^n$ and $f(x)=(x-a)^n+v^d(x)$ as required.

Conversely, let  $\CC_{f,d}$ be  a nonsingular projective model of a smooth plane affine curve $y^d=(x-a)^n+v(x)^d$, where
  $$\deg (v)\leq(n-1)/d, \quad v(a)\neq0.$$
   Let us put $$c:=v(a).$$
   Then  the point $P=(a,c)$ lies in $\CC_{f,d}$. Let us prove that $P$ has order $n$.
  It follows from
 $y^d-v^d(x)=(x-a)^{n}$ that all zeros of  $y-v(x)$  have abscissa $a$. Clearly,
 $P=(a,c)$ is a zero of  $y-v(x)$ but  $\omega^i P=(a,\zeta^i c)$ is {\sl not}  a zero of  $y-v(x)$ for each integer $i$ with
 $d-1 \ge i \ge 1$, because $y-v(x)$ takes the value
 $$\zeta^i c-v(a)=(\zeta^i-1)c+(c-v(a))=
 (\zeta^i-1)c\neq 0$$
 at $\omega^i P$.
 This implies that   $y-v(x)$ has exactly one zero, namely $P$. Obviously, $y-v(x)$ has exactly one pole, namely $\infty$,
 and its multiplicity is $n$. It follows that
 $$\div(y-v(x))=n(P)-n(\infty)=n((P)-(\infty)).$$
 This implies that $P$ has finite order $m$ 
  and $m$ divides $n$. Obviously $m\neq 1,d$ and by already proven (i)
  we get $m=n$.

  (iv) follows readily from (iii).
\end{proof}
 \begin{prop}\label{d points}
 The number of torsion points of order $m\neq d$ on $\CC_{f,d}$ is divisible by $d$.
In particular, if $\CC_{f,d}$ has a point of order $m\neq d$ on $J(\CC_{f,d})$, then it has at least $d$ points of order $m$ on  $J(\CC_{f,d})$.
\end{prop}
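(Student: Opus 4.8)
The plan is to exploit the $\mu_d$-action $\langle\omega\rangle$ on $\CC_{f,d}$ and to show that the set of points of order $m$ decomposes into \emph{free} $\langle\omega\rangle$-orbits, each of which therefore consists of exactly $d$ points.

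First I would record that, since the base point $O=\infty$ of the Albanese embedding $\alb_O$ is fixed by $\omega$, the curve automorphism $\omega$ induces a group automorphism $\omega_\ast$ of $J(\CC_{f,d})$ which is compatible with $\alb_O$; in particular $\omega$ maps the finite set $S_m$ of points of order $m$ on $\CC_{f,d}\subset J(\CC_{f,d})$ bijectively onto itself (a group automorphism preserves the order of torsion points). Hence $S_m$ is a disjoint union of $\langle\omega\rangle$-orbits, and the cardinality of each orbit divides $d=\#\langle\omega\rangle$.

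Next, assuming $m>1$ and $m\neq d$, I would show every such orbit is free, i.e.\ has cardinality exactly $d$. Suppose some $P\in S_m$ had nontrivial stabilizer, so that $\omega^i P=P$ for an integer $i$ with $0<i<d$. Writing $P=(a,c)$, the relation $\omega^i P=P$ gives $\zeta^i c=c$; since $\zeta^i\neq 1$ this forces $c=0$, whence $f(a)=0$ and $P=(w_j,0)$ for some $j$ (the only remaining fixed point being $\infty$, which has order $1$). But by Theorem~\ref{thm1}(ii) each $(w_j,0)$ has order exactly $d$, and $\infty$ has order $1$ — neither equals $m$, a contradiction. Therefore the stabilizer of every $P\in S_m$ is trivial, each $\langle\omega\rangle$-orbit in $S_m$ consists of $d$ distinct points, and $\#S_m$ is divisible by $d$. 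The ``in particular'' follows at once: if $\CC_{f,d}$ has a point of order $m\neq d$, then $\#S_m\geq 1$, so divisibility by $d$ forces $\#S_m\geq d$.

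The only point requiring slight care is the bookkeeping of the fixed points of the powers of $\omega$ and the orders of those fixed points; but this has already been settled (the fixed points are the $(w_j,0)$ and $\infty$, cf.\ Lemma~\ref{l1}, and their orders are pinned down by Theorem~\ref{thm1}(ii)), so I anticipate no genuine obstacle — the statement is essentially a reformulation of the fact, noted in the Introduction, that a nontrivial $\mu_d$-orbit of torsion points (an $m$-packet) always consists of $d$ distinct points.
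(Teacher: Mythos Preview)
Your proof is correct and follows essentially the same argument as the paper's: both use the $\mu_d$-action to show that a point $P$ of order $m\neq d$ (hence $P\neq\infty$ and, by Theorem~\ref{thm1}(ii), $P=(a,c)$ with $c\neq 0$) lies in a free $\langle\omega\rangle$-orbit of $d$ distinct points, all of the same order. The paper's version is simply terser, stating directly that the $d$ points $\omega^i P=(a,\zeta^i c)$ are distinct and have order $m$, while you phrase the same content as an orbit decomposition with trivial stabilizers.
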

\begin{proof}
Let $P$ be a point of order $m$. By $m\neq d$ we have $P=(a,c)$, where $c\neq0$.  Since $\omega$ is an automorphism of $\CC_{f,d}$ that
leaves invariant $\infty$,  the distinct  $d$ points
$$\omega^i P=(a,\zeta^i c) \in \CC_{f,d}(K) \quad (0 \le i \le d-1)$$
also have order $m$.
\end{proof}

\begin{Rem}
\label{change}
Let
$$P=(a_0,c_0), Q=(a_1,c_1)\in     C_{f,d}(K) \subset \CC_{f,d}(K)$$
 be points of order $n$ in $\CC_{f,d}(K)$ with $a_0 \ne a_1$. Let us consider the automorphism of the affine line
 $$S: z \mapsto \lambda z+\mu, \quad \mu=a_0, \ \lambda=a_0-a_1\ne 0.$$
 Then
 $$S(0)=a_0, \quad S(-1)=a_1.$$
 and
 $$\tilde{f}(x):= \lambda^{-n}f(Sx)\in K[x]$$
 is a monic degree $n$ polynomial without repeated roots. Choose
 $$\lambda^{n/d}:=\sqrt[d]{\lambda^n}\in K$$ and consider  the smooth plane affine curve
 $$C_{\tilde{f},d}: w^d=\tilde{f}(z)$$
 and the biregular isomorphism of plane affine curves
 $$\Psi: C_{\tilde{f},d} \to C_{f,d}, \quad (z,w) \mapsto (x,y)=\left(S(z), \lambda^{n/d}w\right),$$
 which extends to the biregular isomorphism $\bar{\Psi}:\CC_{\tilde{f},d} \to \CC_{f,d}$,
 which sends $\infty$ to $\infty$. We have
 $$\begin{aligned}
 \tilde{P}=&\left(0, \ c_0/\lambda^{n/d}\right),  \ \tilde{Q}=\left(-1\ ,c_1/\lambda^{n/d}\right) \in C_{\tilde{f},d}(K)\subset \CC_{\tilde{f},d} (K), \\
&\bar{\Psi}(\tilde{P})=\Psi\left((0,  \ c_0/\lambda^{n/d})\right)=(a_0,c_0)=P,\\
 &\bar{\Psi}(\tilde{Q})=\Psi\left((-1 ,\  c_1/\lambda^{n/d})\right)   =(a_1,c_1)=Q.
 \end{aligned}$$
 This implies that both $\tilde{P}$ and $\tilde{Q}$ are points of order $n$ on
 $\CC_{\tilde{f},d}$ with $z$-coordinates $0$ and $-1$ respectfully.
\end{Rem}

\section{Points of order  $n$}
\label{orderN}

\begin{example}
\label{ex1}
Suppose that $\fchar(K)$ does {\sl not} divide $n$.
Then the polynomial $x^{n}+1$ has {\sl no} repeated roots.  By Theorem~\ref{thm1}(iii) (applied to $a=0$ and $v(x)=\zeta^i$) the  $d$ distinct points $(0,\zeta^i)$ (with $0 \le i \le d-1$) of the curve
$$y^d=x^{n}+1$$
have order $n$ (compare with example~1, p. 6 in \cite{BZ}, where the case  $d=2$ was discussed).

\end{example}
\begin{example}\label{ex2}
Suppose that $\fchar(K)$ divides  $n$.  Choose a {\sl nonzero} $b \in K$.
Then the polynomial $f(x)=x^{n}+(bx+1)^d$ has no repeated roots. Indeed, we have
$f^{\prime}(x)=bd(bx+1)^{d-1}$. So, if $x_0$ is a root of $f^{\prime}(x)$, then $b x_0+1=0$,
which implies that $x_0 \ne 0$ and
$$f(x_0)=x_0^{n}+(bx_0+1)^d=x_0^{n} \neq 0.$$
This proves that $f(x)$ has {\sl no} repeated roots. Applying Theorem \ref{thm1}(iii) to $a=0$ and $v(x)=bx+1$,
we conclude that the curve
$$y^d=x^{n}+(bx+1)^d$$ has  a torsion point $P=(0,1)$ of order $n$  (compare with example~2, p.6 in \cite{BZ}).  By Theorem~\ref{thm1}(iii) the curve has at least $d$ points of order $n$, namely $(0,\zeta^i)$, where $0\leq i\leq d-1$.
\end{example}
\begin{example}\label{ex3}
Suppose that
$$n=4, \ d=3, \quad\fchar(K)\ne 2,3.$$
Choose $\sqrt{3} \in K$. Then
$$\begin{aligned}
f(x)=x^4+2 x^3+\left(\sqrt{3}+3\right)x^2+(\sqrt{3}+2)x+\left(\frac{5}{18}\sqrt{3}+\frac{1}{2}\right)\\
=x^4+2\left(x+\frac{3+\sqrt{3}}{6}\right)^3=(x+1)^4+2 \left(-x+\frac{-3+\sqrt{3}}{6}\right)^3.\end{aligned}$$
So, we have (for any choice of $\sqrt[3]{2}\in K$)
$$f(x)=x^4+v_0^3(x)=(x+1)^4+v_1^3(x)$$ where
$$ v_0(x)=\sqrt[3]{2}\left(x+\frac{3+\sqrt{3}}{6}\right), \ v_1(x)=\sqrt[3]{2}\left(-x+\frac{-3+\sqrt{3}}{6}\right).$$
By Theorem \ref{thm1}, we get six points
$$\left(0, v_0(0)\right)=\left(0, \ \sqrt[3]{2}\ \frac{3+\sqrt{3}}{6}\right), \quad
\left(-1, v_1(-1)\right)=\left(-1, \sqrt[3]{2}\ \frac{3+\sqrt{3}}{6}\right)$$
of order $4$ in $\mathcal{C}_{f,3}(K)$. (Recall that we have three choices of $\sqrt[3]{2}\in K$.)
See also Theorem \ref{4,3} and its proof below.
\end{example}
\begin{thm}
\label{thm3}
Suppose that 
$$2 \le d <n,  \quad (d,n)=1, \quad (d, \fchar(K))=1.$$
Let $\CC_{f,d}$ be a smooth projective model of the smooth plane affine curve $C_{f,d}: y^d=f(x)$, where $f(x)$ is a monic polynomial of degree $n$ without repeated roots.

Assume that one of the following two conditions holds.

\begin{itemize}
\item[(i)]
$p:=\fchar (K)$ is a prime and $n$ is a power of $p$.
\item[(ii)]
$\fchar(K)=0$ and $d$ does not divide $n-1$.
\end{itemize}

Then the number of points of order $n$ on $\CC_{f,d}$ is either $0$ or $d$.
\end{thm}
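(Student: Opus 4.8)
The plan is to show that $\CC_{f,d}$ cannot carry two points of order $n$ lying in distinct $\mu_d$-orbits. Since every point of order $n$ has nonzero $y$-coordinate (by Theorem~\ref{thm1}(ii), as $d<n$), distinct $\mu_d$-orbits of order-$n$ points have distinct $x$-coordinates; and by Proposition~\ref{d points} the number of order-$n$ points is a multiple of $d$, hence at least $d$ once it is positive. So it suffices to rule out the existence of points $P=(a_0,c_0)$ and $Q=(a_1,c_1)$ of order $n$ with $a_0\ne a_1$: this forces the order-$n$ points to form a single $\mu_d$-orbit or none, i.e. the count is $0$ or $d$.

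So I would assume such $P,Q$ exist and, invoking Remark~\ref{change}, replace $\CC_{f,d}$ by a biregularly isomorphic curve of the same type so that $a_0=0$ and $a_1=-1$. By Theorem~\ref{thm1}(iii) there are then polynomials $v_0,v_1\in K[x]$ with $\deg v_i\le (n-1)/d$ and $f(x)=x^n+v_0(x)^d=(x+1)^n+v_1(x)^d$, hence
$$v_0(x)^d-v_1(x)^d=(x+1)^n-x^n .$$
Writing $\zeta$ for a primitive $d$-th root of unity in $K$, the left-hand side factors as $\prod_{i=0}^{d-1}\bigl(v_0(x)-\zeta^i v_1(x)\bigr)$, and the whole theorem comes down to showing that this identity is impossible under either hypothesis.

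Under hypothesis (ii) ($\fchar K=0$, $d\nmid n-1$) I would argue by degrees alone: $(x+1)^n-x^n$ is nonzero of degree exactly $n-1$ (leading coefficient $n\ne 0$), so none of the $d$ factors vanishes, and therefore $n-1=\sum_{i=0}^{d-1}\deg\bigl(v_0-\zeta^i v_1\bigr)\le d\lfloor (n-1)/d\rfloor\le n-2$, a contradiction. Under hypothesis (i) ($p=\fchar K$, $n=p^k$) the right-hand side collapses by the Frobenius identity to $(x+1)^n-x^n=1$; then each factor $v_0-\zeta^i v_1$ is a nonzero constant, and comparing the factors for $i=0$ and $i=1$ forces $v_1$, hence $v_0$, to be constant, so $f(x)=x^n+c^d$ for some $c\in K$. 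Since $K$ is algebraically closed of characteristic $p$ I can write $c^d=\gamma^{p^k}$, whence $f(x)=(x+\gamma)^{p^k}=(x+\gamma)^n$ has a root of multiplicity $n\ge 2$, contradicting the assumption that $f$ has no repeated roots.

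I expect the only points needing care to be the reduction via Remark~\ref{change} (one must note that a biregular isomorphism fixing the point at infinity preserves orders of points, hence the count of order-$n$ points) and keeping the floor-function estimate in case (ii) sharp. The conceptual heart of the matter is simply the rigidity of the equation $v_0^d-v_1^d=(x+1)^n-x^n$: it is degree-theoretically impossible in characteristic $0$ when $d\nmid n-1$, and in characteristic $p$ with $n$ a power of $p$ it degenerates to forcing $f$ to be a perfect $n$-th power.
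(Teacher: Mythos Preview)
Your argument is correct and follows essentially the same route as the paper's proof: reduce to two order-$n$ points with distinct abscissas, apply Theorem~\ref{thm1}(iii) to obtain $v_0^d-v_1^d=(x-x_1)^n-(x-x_0)^n$, and derive a contradiction by a degree count in case~(ii) and by showing $v_0,v_1$ are constants in case~(i). The only cosmetic differences are that the paper does not normalize to $a_0=0,\,a_1=-1$ via Remark~\ref{change} but works directly with $x_0,x_1$, and in case~(i) the paper finishes by noting $f'(x)=0$ (since $n$ is a power of $p$) rather than writing $c^d=\gamma^n$ to get $f(x)=(x+\gamma)^n$; both endings yield the same contradiction with $f$ being separable.
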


\begin{proof}
Suppose that  there are at least two $n$-packets on $\CC_{f,n-1}$, i.e., there are
points $(x_0, y_0), (x_1,y_1) \in C_{f,n-1}(K)$ of order $n$ such that $x_0 \ne x_1$.
It follows from Theorem \ref{thm1} (iii) that there are polynomials $v_0(x), v_1(x) \in K[x]$
such that
$$\begin{aligned}f(x)&=(x-x_0)^n+v_0(x)^d=(x-x_1)^n+v_1(x)^d,\\ &\deg(v_0), \ \deg(v_1) \le (n-1)/d.\end{aligned}$$
This implies that
\begin{equation}
\label{x0x1}
(x-x_1)^n-(x-x_0)^n=v_0^d(x)-v_1^d(x)=\prod_{\gamma \in \mu_d}(v_0(x)-\gamma v_1(x)).
\end{equation}

{\bf Case (i)}  $n$ is a power of prime characteristic $p$, the LHS is just $(x_0-x_1)^n$, which is a nonzero constant.
On the other hand, the RHS splits into a product
$$v_0^d(x)-v_1^d(x)=\prod_{\gamma \in \mu_d}(v_0(x)-\gamma v_1(x)).$$
This implies that each factor $v_0(x)-\gamma v_1(x)$ is a constant polynomial, i.e.,
$$v_0(x)-\gamma v_1(x) \in K \quad \forall \gamma \in \mu_d.$$
Since $d \ge 2$,  there is  $\gamma \in \mu_d$ such that $\gamma \ne 1$. This implies that
$$v_0(x)-v_1(x) \in K, \quad v_0(x)-\gamma v_1(x) \in K.$$
It follows that
$(\gamma-1)v_1(x) \in K$, i.e., $v_1(x)$ is a constant, say, $\lambda \in K$.
It follows that
$$f(x)=(x-x_1)^n+\lambda^d.$$
The properties of $n$ imply that $f^{\prime}(x)=0$, and therefore $f(x)$ has a repeated root,
which is not the case. The obtained contradiction proves the desired result.

{\bf Case (ii)} Since $\fchar(K)=0$, the RHS of \eqref{x0x1} is a degree $n-1$ polynomial with leading coefficient
$n(x_0-x_1)$.  Since $d$ does not divide $(n-1)$, the ratio $(n-1)/d$ is not an integer. This implies that
$$\deg(v_0), \ \deg(v_1) < (n-1)/d.$$
It follows that
$$\deg(v_0-\gamma v_i)< (n-1)/d \quad \forall \gamma \in \mu_d.$$
This implies that the RHS of \eqref{x0x1} is a polynomial of degree
$$< d \cdot (n-1)/d=n-1.$$
Since the degree of the LHS is $(n-1)$, which is greater than the degree of the RHS, we get a contradiction, which proves the desired result.
\end{proof}


\begin{thm}
\label{thm40}
Let $\CC_{f,n-1}$ be a smooth projective model of the smooth plane affine curve $C_{f,n-1}: y^{n-1}=f(x)$, where $f(x)$ is a monic polynomial of degree $n$ without repeated roots. Assume that  $n\geq5$ and $\fchar (K)=0$. Then the number of points of order $n$ on $\CC_{f,n-1}$ is either $0$ or $n-1$.
\end{thm}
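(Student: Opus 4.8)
The plan is to prove that $\CC_{f,n-1}$ carries at most one $n$-packet; since every point of order $n$ lies in an $n$-packet, each such packet consists of $n-1$ points, and by Proposition~\ref{d points} the number of points of order $n$ is a multiple of $n-1$, this yields the dichotomy. Note that Theorem~\ref{thm3} does not apply here, because $d=n-1$ \emph{does} divide $n-1$.

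Assume for contradiction that there are points of order $n$ with two distinct $x$-coordinates. By Remark~\ref{change} I may normalize these to $0$ and $-1$, and then Theorem~\ref{thm1}(iii) (with $d=n-1$, so that $(n-1)/d=1$) gives polynomials $v_0,v_1\in K[x]$ of degree $\le 1$ with
$$f(x)=x^n+v_0(x)^{n-1}=(x+1)^n+v_1(x)^{n-1}.$$
Subtracting and factoring the difference of $(n-1)$-st powers,
$$\prod_{\lambda\in\mu_{n-1}}\bigl(v_1(x)-\lambda v_0(x)\bigr)=x^n-(x+1)^n.$$
In characteristic $0$ the right-hand side has degree exactly $n-1$ and is squarefree (its roots are $1/(\eta-1)$, $\eta\in\mu_n\setminus\{1\}$); hence every factor $v_1-\lambda v_0$ is genuinely linear, and $v_0,v_1$ are linearly independent (were they proportional, the left-hand side would be either $0$ or a perfect $(n-1)$-st power, impossible since $n-1\ge 4$). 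Therefore $\Psi:=v_1/v_0$ is a well-defined Möbius transformation, the unique root of $v_1-\lambda v_0$ is $\Psi^{-1}(\lambda)$, and so the zero set of $x^n-(x+1)^n$ is $\Psi^{-1}(\mu_{n-1})$; but that zero set also equals $\Phi(\mu_n\setminus\{1\})$, where $\Phi(w):=1/(w-1)$. Thus $M:=\Phi^{-1}\circ\Psi^{-1}$ is a Möbius transformation carrying $\mu_{n-1}$ bijectively onto $\mu_n\setminus\{1\}$, and the remaining task is to show that this is impossible when $n\ge 5$.

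To that end I would first reduce to $K=\mC$: $M$ is determined by the images of three points of $\mu_{n-1}$, which lie in $\mQ(\zeta_{n(n-1)})\subset K$, so $M$ may be taken defined over $\overline{\mQ}$, and we fix an embedding $\overline{\mQ}\hookrightarrow\mC$. Over $\mC$, since $\mu_{n-1}$ lies on the unit circle $S^1$ and $M(\mu_{n-1})=\mu_n\setminus\{1\}$ is a set of $n-1\ge 4$ points of $S^1$, the image $M(S^1)$---a circle or a line---shares at least three points with $S^1$ and therefore coincides with it; thus $M$ preserves $S^1$. Composing with $z\mapsto z^{-1}$ (which permutes $\mu_{n-1}$ and reverses the orientation of $S^1$) I may assume $M$ preserves the cyclic order on $S^1$, and composing with a rotation $z\mapsto\zeta_{n-1}^kz$ I may assume $M(1)=\zeta_n$. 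An orientation-preserving bijection taking the cyclically ordered tuple $(1,\zeta_{n-1},\dots,\zeta_{n-1}^{n-2})$ onto $\mu_n\setminus\{1\}$ and sending $1$ to $\zeta_n$ is then forced to be $\zeta_{n-1}^i\mapsto\zeta_n^{\,i+1}$ for $0\le i\le n-2$.

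Now I use $n\ge 5$: apply $M$ to $1,\zeta_{n-1},\zeta_{n-1}^2,\zeta_{n-1}^3$ and invoke the invariance of the cross-ratio under $M$,
$$(1,\zeta_{n-1};\zeta_{n-1}^2,\zeta_{n-1}^3)=(\zeta_n,\zeta_n^2;\zeta_n^3,\zeta_n^4)=(1,\zeta_n;\zeta_n^2,\zeta_n^3).$$
A short computation gives $(1,z;z^2,z^3)=\frac{(1+z)^2}{1+z+z^2}=1+\frac{1}{(z+z^{-1})+1}$, which is an injective function of $z+z^{-1}$; hence the equality above forces $\zeta_{n-1}+\zeta_{n-1}^{-1}=\zeta_n+\zeta_n^{-1}$, i.e.\ $\cos\frac{2\pi}{n-1}=\cos\frac{2\pi}{n}$. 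This is false, since $0<\frac{2\pi}{n}<\frac{2\pi}{n-1}\le\frac{\pi}{2}<\pi$ for $n\ge 5$ and $\cos$ is injective on $[0,\pi]$. The contradiction proves the theorem. I expect the rigidity in the third paragraph---that a circle-preserving Möbius transformation is pinned down by the cyclic combinatorics of the two root-of-unity configurations---to be the crux; the rest is the algebraic reduction above together with one cross-ratio identity.
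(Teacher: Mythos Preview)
Your proof is correct, and the initial reduction---normalizing the two abscissas to $0$ and $-1$, writing $f(x)=x^n+v_0^{\,n-1}=(x+1)^n+v_1^{\,n-1}$ with linear $v_0,v_1$, and concluding that a M\"obius transformation must carry $\mu_{n-1}$ onto the root set $\Eta_n$ of $(x+1)^n-x^n$ (equivalently onto $\mu_n\setminus\{1\}$ after the substitution $x=1/(t-1)$)---is exactly the paper's reduction; the paper packages the linear-independence step as Lemma~\ref{honest}.

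Where you genuinely diverge is in the impossibility argument for such a M\"obius map $M:\mu_{n-1}\to\mu_n\setminus\{1\}$. The paper's Lemma~\ref{gen} is arithmetic: the cross-ratio of $1,\eps,\eps^2,\eps^3$ (with $\eps$ a primitive $m$th root of unity) forces $\eps$ to satisfy a quadratic over $\mQ(r)$, and combined with $\mQ(\zeta_m)\cap\mQ(\zeta_n)=\mQ$ for $\gcd(m,n)=1$ this contradicts $\varphi(m)>2$; because of the constraint $m>4,\ m\ne 6$ the paper must run the argument once with $m=n-1$ (covering $n>5$, $n\ne7$) and once, after the substitution $x=1/(t-1)$, with $m=n$ (covering $n=5,7$). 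Your argument is geometric and uniform: embed into $\mC$, use the circle-preserving property of M\"obius maps to force $M(S^1)=S^1$, normalize by an inversion and a rotation of the source so that $M$ is orientation-preserving with $M(1)=\zeta_n$, whence the cyclic-order constraint pins down $M(\zeta_{n-1}^{\,i})=\zeta_n^{\,i+1}$ for $0\le i\le n-2$; a single cross-ratio identity $(1,z;z^2,z^3)=1+\bigl((z+z^{-1})+1\bigr)^{-1}$ then yields $\cos\frac{2\pi}{n-1}=\cos\frac{2\pi}{n}$, which is absurd. The trade-off: the paper's route stays inside algebra and works over any algebraically closed field of characteristic $0$ without ever invoking $\mC$, at the cost of a case split; your route avoids the split and is arguably more transparent, at the cost of passing through the analytic picture on $S^1$ (which is harmless here since everything lives in a cyclotomic field).
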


For the proof we need the following auxiliary statement.

\begin{lem}\label{gen}
 Let $n>4$, $n\neq 6$ and $\fchar(K)=0$.
 Let $\eps\in K$ be a primitive $n$th root of unity.
 Let $m$ be a positive integer such that  $(m,n)=1$. Let $\alpha_0,\alpha_1,\alpha_2,\alpha_3\in \BQ(\zeta_m)$.
Then there is no linear fractional transformation $T$ over $K$ such that
$$T(\eps^i)=\alpha_i, \quad 0\leq i\leq3.$$
\end{lem}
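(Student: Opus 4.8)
The plan is to use the classical fact that a linear fractional transformation is determined by its values at any three points, or equivalently that it preserves the cross-ratio. Since $n>4$, the four elements $1,\eps,\eps^2,\eps^3\in K$ are pairwise distinct. Suppose a linear fractional transformation $T$ over $K$ with $T(\eps^i)=\alpha_i$ (for $0\le i\le 3$) existed. As $T$ is a bijection of $\mathbb{P}^1(K)$, the four images $\alpha_0,\alpha_1,\alpha_2,\alpha_3$ would be pairwise distinct as well, and cross-ratio invariance would give
$$\lambda:=[1,\eps;\eps^2,\eps^3]=[\alpha_0,\alpha_1;\alpha_2,\alpha_3].$$
First I would compute the left-hand side directly. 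From $1-\eps^2=(1-\eps)(1+\eps)$ and $1-\eps^3=(1-\eps)(1+\eps+\eps^2)$ --- all three factors $1-\eps$, $1-\eps^2$, $1-\eps^3$ being nonzero because $\eps$ has order $n>4$ --- one obtains
$$\lambda=\frac{(1+\eps)^2}{1+\eps+\eps^2}\in\BQ(\zeta_n).$$
On the other hand, since the $\alpha_i$ are pairwise distinct elements of $\BQ(\zeta_m)$, the quantity $\dfrac{(\alpha_0-\alpha_2)(\alpha_1-\alpha_3)}{(\alpha_0-\alpha_3)(\alpha_1-\alpha_2)}$ is a well-defined element of $\BQ(\zeta_m)$; hence $\lambda\in\BQ(\zeta_m)$.

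Next I would turn this into a degree obstruction. Clearing denominators in $\lambda(1+\eps+\eps^2)=(1+\eps)^2$ yields
$$(1-\lambda)\eps^2+(2-\lambda)\eps+(1-\lambda)=0,$$
so $\eps=\zeta_n$ is a root of $Q(z):=(1-\lambda)z^2+(2-\lambda)z+(1-\lambda)\in\BQ(\zeta_m)[z]$. The polynomial $Q$ is not identically zero, since vanishing of all its coefficients would force $\lambda=1$ and $\lambda=2$ at once. Hence $\zeta_n$ satisfies a nonzero polynomial of degree at most $2$ over $\BQ(\zeta_m)$, so
$$[\BQ(\zeta_m)(\zeta_n):\BQ(\zeta_m)]\le 2.$$
But $(m,n)=1$ forces $\BQ(\zeta_m)(\zeta_n)=\BQ(\zeta_{mn})$, and the standard cyclotomic degree formula gives $[\BQ(\zeta_{mn}):\BQ(\zeta_m)]=\vf(mn)/\vf(m)=\vf(n)$. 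Since $n>4$ and $n\neq 6$, the integer $n$ is not one of $1,2,3,4,6$ --- the only values with $\vf(n)\le 2$ --- so $\vf(n)\ge 3$, a contradiction. (Equivalently, one may first observe $\lambda\in\BQ(\zeta_m)\cap\BQ(\zeta_n)=\BQ$, using $(m,n)=1$, and then conclude that $\zeta_n$ satisfies a nonzero rational quadratic.)

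The argument relies only on cross-ratio invariance under linear fractional transformations together with elementary cyclotomic field theory, so most of it is routine. The step that requires genuine care --- and where the hypotheses $n>4$, $n\neq 6$ are actually used --- is the final one: one must check that $Q$ is really a nonzero polynomial (not killed by an accidental $\lambda=1$) and that $\vf(n)\ge 3$ exactly on the stated range. These hypotheses are sharp: for $n=4$ one gets $\lambda=2$ and for $n=6$ one gets $\lambda=3/2$, so $\lambda\in\BQ$ and an appropriate $T$ with rational images genuinely exists in those excluded cases.
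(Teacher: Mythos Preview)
Your proof is correct and follows essentially the same route as the paper: compute the cross-ratio of $1,\eps,\eps^2,\eps^3$, observe that it must lie in $\BQ(\zeta_m)$ (the paper phrases this via the intersection $\BQ(\zeta_n)\cap\BQ(\zeta_m)=\BQ$, which you give as your parenthetical alternative), and deduce that $\eps$ satisfies a nonzero quadratic over a field in which it has degree $\vf(n)>2$. The only cosmetic difference is that the paper first writes down the degree-$5$ polynomial $r(t^2-t)(t^3-1)-(t^2-1)(t^3-t)$ and factors out $t(t-1)^2$ to obtain the quadratic, whereas you simplify the cross-ratio first and then clear the denominator --- the resulting quadratic is the same up to sign.
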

We will  also need the following assertion that will be proven at the end of this section.
\begin{lem}
\label{honest}
Let $n \ge 3$ be an integer.
Suppose that $\fchar(K)$  does not divide  $n-1$.
Let $\bea,\beb,\bec,\bed$ be elements of $K$ such that
$$x^n+(\bea x+\beb)^{n-1}=(x+1)^n +(\bec x+\bed)^{n-1}.$$
Let us assume that the polynomial
$$f_{\bea,\beb;\bec,\bed}(x):=x^n+(\bea x+\beb)^{n-1}=(x+1)^n +(\bec x+\bed)^{n-1}$$
has no repeated roots.
Then:
\begin{itemize}
\item[(i)]
$\bea^{n-1}=n+\bec^{n-1}, \quad \beb^{n-1}=\bed^{n-1}; \quad \beb \ne 0, \ \bed \ne 0.$
\item[(ii)]
Assume additionally that  $\fchar(K)$  does not divide  $n$. Then
the polynomials $\bea x+\beb$ and $\bec x+\bed$ are not proportional, i.e.,
$$\bea\bed-\beb\bec \ne 0$$
and  the map
 $$T: \mathbb{P}^1(K) \to  \mathbb{P}^1(K), \quad z \mapsto
 \frac{\bea z+\beb}{\bec z+\bed}$$
 is an honest linear fractional transformation of the projective line over $K$.
 In addition,
 $$T(\Eta_n)=\mu_{n-1},$$
 where $\Eta_n\subset K$ is the $(n-1)$-element set of roots of  the polynomial $(x+1)^n-x^n$
 while  $\mu_{n-1}\subset K$ is the $(n-1)$-element   set of roots of  the polynomial $x^{n-1}-1$, which is actually the multiplicative group of $(n-1)$th  roots of unity in $K$.
\end{itemize}
\end{lem}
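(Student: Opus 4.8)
The plan is to derive part~(i) by comparing coefficients in the given polynomial identity, and part~(ii) by combining (i) with the fact that $g(x):=(x+1)^n-x^n$ is a separable polynomial of degree exactly $n-1$.

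For (i), I would rewrite the hypothesis identity as \[(\bea x+\beb)^{n-1}-(\bec x+\bed)^{n-1}=(x+1)^n-x^n\] (the leading terms $x^n$ cancel). Comparing the coefficients of $x^{n-1}$ yields $\bea^{n-1}-\bec^{n-1}=n$, and comparing the constant terms yields $\beb^{n-1}-\bed^{n-1}=1$; these are the asserted relations. For $\beb\neq 0$ and $\bed\neq 0$ I would argue by contradiction: if $\beb=0$, then $f_{\bea,\beb;\bec,\bed}(x)=x^n+\bea^{n-1}x^{n-1}=x^{n-1}\bigl(x+\bea^{n-1}\bigr)$ has a root of multiplicity $n-1\ge 2$ at $x=0$, contradicting that $f_{\bea,\beb;\bec,\bed}$ has no repeated root; symmetrically, $\bed=0$ forces $(x+1)^{n-1}\mid f_{\bea,\beb;\bec,\bed}(x)$, the same contradiction.

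For (ii), I would first verify that $g(x)=(x+1)^n-x^n$ is separable of degree exactly $n-1$: its leading term $nx^{n-1}$ survives because $\fchar(K)\nmid n$, and a common root $x_0$ of $g$ and $g'(x)=n\bigl((x+1)^{n-1}-x^{n-1}\bigr)$ would satisfy $(x_0+1)^n=x_0^n$ and $(x_0+1)^{n-1}=x_0^{n-1}$; the first equality forces $x_0\neq 0$ (else $1=0$) and $x_0\neq -1$ (else $0=(-1)^n$), so $x_0$ and $x_0+1$ are nonzero and dividing the two equalities gives $x_0+1=x_0$, which is absurd. In particular $\abs{\Eta_n}=n-1$. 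Next I would show $\bea\bed-\beb\bec\neq 0$ (which is exactly the assertion that $T$ is non-degenerate): if not, then since $(\bea,\beb)\neq(0,0)$ (because $\beb\neq 0$) we would have $(\bec,\bed)=\lambda(\bea,\beb)$ for some $\lambda\in K$, and $\bed=\lambda\beb$ with $\bed,\beb\neq 0$ would force $\lambda\neq 0$; then the hypothesis identity becomes $(1-\lambda^{n-1})(\bea x+\beb)^{n-1}=g(x)$, whose right-hand side is a nonzero polynomial with $n-1\ge 2$ distinct roots, while its left-hand side vanishes identically or has at most one distinct root --- a contradiction. Hence $T$ is an honest linear fractional transformation of $\mathbb P^1(K)$.

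Finally, to prove $T(\Eta_n)=\mu_{n-1}$: for $x_0\in\Eta_n$ we have $g(x_0)=0$, so the rewritten identity gives $(\bea x_0+\beb)^{n-1}=(\bec x_0+\bed)^{n-1}$; moreover $\bec x_0+\bed\neq 0$, for otherwise $\bea x_0+\beb=0$ as well, so $x_0$ would be a common root of $\bea x+\beb$ and $\bec x+\bed$ --- but these are nonzero (as $\beb,\bed\neq 0$ by (i)) and not proportional, hence share no root. Thus $T(x_0)=(\bea x_0+\beb)/(\bec x_0+\bed)$ is a well-defined nonzero element of $K$ with $T(x_0)^{n-1}=1$, i.e.\ $T(x_0)\in\mu_{n-1}$, so $T(\Eta_n)\subseteq\mu_{n-1}$. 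Since $T$ is a bijection of $\mathbb P^1(K)$ and $\abs{\Eta_n}=n-1=\abs{\mu_{n-1}}$ (the last equality using $\fchar(K)\nmid n-1$), equality follows. I expect the separability of $g$ to be the only point requiring genuine care --- it is precisely there that the extra hypothesis $\fchar(K)\nmid n$ of part~(ii) is used --- while the remaining steps are routine coefficient comparisons and root counting.
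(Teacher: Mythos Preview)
Your overall strategy matches the paper's: compare coefficients for (i), then factor $(x+1)^n-x^n=(\bea x+\beb)^{n-1}-(\bec x+\bed)^{n-1}$ for (ii). Your non-proportionality argument via the separability of $g$ is in fact cleaner than the paper's. There is, however, a genuine gap in your treatment of $\bed\neq 0$.

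Your claim ``symmetrically, $\bed=0$ forces $(x+1)^{n-1}\mid f$'' is false: if $\bed=0$ then $f(x)=(x+1)^n+\bec^{n-1}x^{n-1}$, and $x=-1$ is not even a root unless $\bec=0$. Indeed the conclusion $\bed\neq 0$ is itself false in general: for $n=3$ take $(\bea,\beb,\bec,\bed)=(3/2,\,1,\,\sqrt{-3}/2,\,0)$, giving $f(x)=x^3+\tfrac{9}{4}x^2+3x+1$, which is separable (discriminant $-27/2$). Note too that your constant-term comparison correctly yields $\beb^{n-1}-\bed^{n-1}=1$, \emph{not} the lemma's stated $\beb^{n-1}=\bed^{n-1}$; the paper silently drops the constant term of $(x+1)^n$ here, and its own argument for $\bed\neq 0$ rests on that erroneous identity. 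So both the statement and the paper's proof are flawed on this point, and your ``these are the asserted relations'' is inaccurate.

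Fortunately, part (ii) --- which is all that is used elsewhere in the paper --- does not need $\bed\neq 0$. In your non-proportionality step the clause ``$\bed,\beb\neq 0$ would force $\lambda\neq 0$'' is superfluous: your final contradiction (the left side has at most one distinct root, the right side has $n-1\ge 2$) holds for every $\lambda$. In your last paragraph you only need $\bec x+\bed$ to be a nonzero polynomial; this follows because $(\bec,\bed)=(0,0)$ would give $\bea\bed-\beb\bec=0$, contradicting what you just proved. With these two small adjustments your proof of (ii) is complete; only the assertion $\bed\neq 0$ in (i) must be abandoned.
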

\begin{proof}[Proof of Lemma \ref{gen}]
 Assume the contrary. Let $T$ be a linear fractional transformation over $K$ sending $\eps^i$ to $\alpha_i$ for each $i=0,\ldots,3$.  Let us compute the cross-ratio $r\ne 1$ of the quadruple $\{1,\eps,\eps^2,\eps^3\}$.  We get
 $$r=\frac{(\eps^2-1)(\eps^3-\eps)}{(\eps^2-\eps)(\eps^3-1)},$$
 i.e.,
 $$r (\eps^2-\eps)(\eps^3-1)-(\eps^2-1)(\eps^3-\eps)=0.$$
 This implies that
 $$v(t):=r(t^2-t)(t^3-1)-(t^2-1)(t^3-t) \in K[t]$$
 is a degree $5$ polynomial with leading coefficient $r-1$ and $v(\eps)=0$.
 Notice that
 $$v(t)=t(t-1)^2\cdot w(t),$$
 where
 $$w(t)=r(t^2+t+1)-(t+1)^2$$
 is a {\sl quadratic} polynomial  with leading coefficient $r-1$ and $w(\eps)=0$.
Recall that $\mathrm{char}(K)=0$. This implies that if $r \in \mathbb{Q}$, then the polynomial
 $w(t)$ lies  in $\mathbb{Q}[t]$  and is divisible by the $n$-th cyclotomic polynomial in $\mathbb{Q}[t]$.
 Since $n>4$ and $n\neq6$, we have  $\vf(n)>2$ and get a contradiction. Therefore, $r\not\in\mathbb Q$.
 On the other hand, $r$ coincides with the cross-ratio of $\{\alpha_0,\alpha_1, \alpha_2,\alpha_3\}\subset \mathbb Q(\zeta_m)$
 and therefore lies in the intersection
 $$\mathbb{Q}(\zeta_{n}) \cap \mathbb{Q}(\zeta_{m})=\mathbb{Q} $$ since $(m,n)=1$.
 Hence, $r$ lies in $\mathbb{Q}$, which gives us a contradiction.
\end{proof}

\begin{proof}[Proof of Theorem \ref{thm40}]
Assume that a point $(x_0, c)\in \CC_{f,n-1}(K)$ has order $n$. Then the points $P_i=(x_0,\zeta^i c)\in \mathcal C_{f,n-1}(K)$, where
 $\zeta$ is a primitive $(n-1)$-th root of 1 and $0\leq i\leq n-2$, also have order $n$.
Let us  prove that there are no other points of order $n$ on $\CC_{f,d}$.
By Theorem~\ref{thm1}(iii) there exists  a polynomial  $v(x)\in K[x]$  such that
   $\deg(v)\leq1$, $v(x_0)\neq0$, and $ f(x)=(x-x_0)^{n}+v^{n-1}(x).$
Since $\deg (v) \leq1$, we have $v(x)=\bea x+\beb$ for some $\bea,\beb\in K$ and $\bea x_0+\beb\neq0$.
Let $(x_1,c_1)\in \CC_{f,n-1}(K)$ be a point of order $n$ distinct from all $P_i$, i.e.,  $x_1\neq x_0$. Then by Theorem~\ref{thm1}(iii) there exists a polynomial  $v_1(x)\in K[x]$  such that
   $$\deg(v_1)\leq1, \ v(a_1)\neq0, \ f(x)=(x-x_1)^{n}+v_1^d(x).$$ Then $v_1(x)=\bec x+\bed$ for some $\bec,\bed\in K$ and $\bec x_1+\bed\neq0$.
   In light of Remark \ref{change}, we
may assume that
$$x_0=0, \quad x_1=-1.$$
Then  $f(x)=x^n+(\bea x+\beb)^{n-1}$ and $f(x)=(x+1)^n+(\bec x+\bed)^{n-1}$ for some $\bea,\beb,\bec,\bed\in K$.
Let us consider the  polynomial
\begin{equation}
\label{h(x)}
h_n(x):=(x+1)^n-x^n=(\bea x+\beb)^{n-1}-(\bec x+\bed)^{n-1}\in K[x].
\end{equation}
Recall that $\fchar(K)=0$. Then one may easily check that the polynomial $h_n(x)=(x+1)^n-x^n$ is a degree $(n-1)$ polynomial
with leading coefficient $n$
that has $n-1$ distinct roots
$$\eta(\gamma)=\frac{1}{\gamma-1}, \quad  \text{ where } \gamma\in \mu_n\subset K, \ \gamma \ne 1$$
(compare with \cite[Sect. 6.1]{BZ}).

 In light of Lemma \ref{honest}, $\bea x+\beb$ is not proportional to $\bec x+\bed$; in particular, either $\bea \ne 0$ or $\bec \ne 0$. In addition, the map
 $$T: \mathbb{P}^1(K) \to  \mathbb{P}^1(K), \quad z \mapsto
 \frac{\bea z+\beb}{\bec z+\bed}$$
 is an honest linear fractional transformation of the projective line.

From \eqref{h(x)}
 we have
\beq\label{epseta} \prod\limits_{i=1}^{n-1}{(x-\eta_i)}=\prod\limits_{i=0}^{n-2}((\bea x+\beb)-\eps^i(\bec x+\bed)),\eeq
where $\eps$ is a primitive $(n-1)$-th root of 1 and $\eta_i$ are roots of $(x+1)^n-x^n$.
It follows from \eqref{epseta} that the inverse transformation $T^{-1}$ must send $\eps^i$ to $\eta_i$, which contradicts
Lemma \ref{gen} if $n-1>4$  and $n-1\neq 6$, i.e. $n>5, n\neq7$, since the roots of $(x+1)^n-x^n$ lie in $\mathbb Q(\zeta_n)$ and $m:=n-1$ is relatively prime to $n$.

 It remains to consider the cases $n=5$, $n=7$.
Let us return to equation \eqref{h(x)}.
Substituting $$x=\frac1{t-1} $$ in \eqref{epseta},
we obtain
$$
\left(\frac1{t-1}+1\right)^n-\left(\frac1{t-1}\right)^n= \left(\frac \bea{t-1}+\beb\right)^{n-1}-\left(\frac \bec{t-1}+\bed\right)^{n-1},$$

$$\frac{t^n-1}{(t-1)^n}=\frac{(\beb t+(\bea-\beb))^{n-1}-(\bed t+(\bec-\bed))^{n-1}}{(t-1)^{n-1}},$$

\beq\label{x to t} t^{n-1}+\cdots+t+1=(\beb t+(\bea-\beb))^{n-1}-(\bed t+(\bec-\bed))^{n-1}
\eeq

\beq\label{x to t 2}\prod\limits_{i=1}^{n-1}(t-\eps^i)=\prod\limits_{i=1}^{n-1}(\beb t+(\bea-\beb)-\zeta_i(\bed t+(\bec-\bed)),\eeq
where $\eps$ is a primitive $n$-th root of unity and $\zeta_i$ are all $(n-1)$-th roots of unity.
As above, it is easy  to show that $$S(z)=\frac{\beb z+(\bea-\beb)}{\bed z+(\bec-\bed)}$$  is an honest linear fractional transformation of the projective line.
It follows from equation \eqref{x to t 2} that $S$ must take $\eps^i$ to $\zeta_i$, which contradicts Lemma \ref{gen} if $n>4$, $n\neq 6$. The group $\mu_3$ acts on $\mu_3$ by multiplications.
\end{proof}

\section{Points of order 4 on Picard curves}
\label{picard}
Throughout this section $n=4$ and $d=3$, i.e., $f(x)$ is a quartic polynomial. If $\fchar(K) \ne 2,3$,
then $\CC_{f,3}$ is a so-called {\sl Picard curve} and has genus $3$.

Recall that every curve $ \CC_{f,3}(K)$ is endowed with the action of the group $\mu_3$, which leaves $\infty$ invariant and sends a point $P=(a,c) \in C_{f,3}(K)$ to
$$\gamma(P)=(a,\gamma c) \quad \forall \gamma \in \mu_3.$$

\begin{thm}\label{4,3}
 Assume that   $\fchar (K)\neq 2,3$.  Let  $\Eta=\Eta_3\subset K$ be the $3$-element set of roots of  $(x+1)^4-x^4$
 and  $\mu_3\subset K$ be the $3$-element set of roots of   $x^3-1$.

 Then
 there is a natural bijection $\mathfrak{B}$ between

 a) the set  $\mathcal{B}$ of bijections $\phi:\Eta \to  \mu_3$ between
$\Eta$  and  $\mu_3$.

 b) The set $\mathcal{F}$ of  quartic monic polynomials $f(x)\in K[x]$ without repeated roots
 and $\mu_3$-orbits of pairs of points $P,Q \in \CC_{f,3}(K)$ that enjoy the following properties.
 \begin{itemize}
 \item[(i)]
 Both $P$ and $Q$ have order $4$;
 \item[(ii)]
 $x(P)=0; \ x(Q)=-1$.
 \item[(iii)]
 Both sets $\mathcal{B}$ and $\mathcal{F}$ are endowed with the natural free actions of $\mu_3$ and the bijection $\mathfrak{B}: \mathcal{B} \to \mathcal{F}$ is $\mu_3$-equivariant.
 Namely, each $\delta \in \mu_3$ sends $\phi\in \mathcal{B}$ to the bijection
 $\delta \phi: \eta \mapsto \gamma \cdot \phi(\eta)$.  An element of $\mathcal{F}$  that consists of a polynomial $f(x)$ and an $\mu_3$-orbit of pair $(P,Q)$
 is sent by $\delta$ to the orbit of $(\delta(P),Q)$ with the same $f(x)$.
 \end{itemize}


 The quartic polynomial $f(x)=f_{\phi}(x)$ attached to  $\phi$ is constructed as follows. Choose
  a linear fractional transformation
$$
T_{\phi}(z)=\frac{\bea z+\beb}{\bec z+\bed}
$$
over $K$ such that
$$T_{\phi}(\eta)=\phi(\eta) \quad \forall \eta \in \Eta; \quad \bea^3-\bec^3=4.$$
Then
$$f_{\phi}(x)=x^4+(\bea x+\beb)^3=(x+1)^4+(\bec x+\bed)^3.$$
The points $P$ and $Q$ are defined as follows.
$$P=(0, \bea \cdot 0+\beb)=(0,\beb); \quad Q=(-1, \bec (-1)+\bed)=(-1,\bed-\bec).$$

\end{thm}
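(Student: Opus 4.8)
The plan is to establish the bijection $\mathfrak{B}: \mathcal{B} \to \mathcal{F}$ by going in both directions and checking that the two constructions are mutually inverse, then verifying $\mu_3$-equivariance and freeness of the actions. The backbone is Theorem~\ref{thm1}(iii) together with the normalization of Remark~\ref{change}, which allows us to move any pair of order-$4$ points with distinct $x$-coordinates to $x$-coordinates $0$ and $-1$. So the content of $\mathcal{F}$ is really: monic quartic $f$ without repeated roots admitting representations $f(x)=x^4+(\bea x+\beb)^3=(x+1)^4+(\bec x+\bed)^3$ (from Theorem~\ref{thm1}(iii), since $(n-1)/d = 3/3 = 1$ so $\deg v \le 1$), together with the data of which $\mu_3$-translate of the order-$4$ point over $x=0$ is singled out.

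First I would run the forward map. Given $\phi \in \mathcal{B}$, pick any linear fractional $T_\phi$ with $T_\phi(\eta)=\phi(\eta)$ for all $\eta\in\Eta$; since $\Eta$ and $\mu_3$ each have $3$ elements, a linear fractional transformation is uniquely determined by $\phi$, so $T_\phi$ is literally unique. Write $T_\phi(z)=(\bea z+\beb)/(\bec z+\bed)$ with coefficients normalized so that $\bea^3-\bec^3=4$ — this uses Lemma~\ref{honest}(i), which tells us that for any representation as above one has $\bea^3 = 4+\bec^3$ (with $n=4$), so the normalization $\bea^3-\bec^3=4$ is exactly the right scaling of the projective coordinates of $T_\phi$, and it is unique up to the ambiguity of cube roots — which is precisely the $\mu_3$ that will act. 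The key identity to check is that $h_4(x):=(x+1)^4-x^4$ (a cubic with leading coefficient $4$ and roots $\Eta$) equals $(\bea x+\beb)^3-(\bec x+\bed)^3$: both are cubics; comparing leading coefficients gives $\bea^3-\bec^3=4$ (our normalization), and the condition $T_\phi(\Eta)=\mu_3$ says that at each root $\eta$ of $h_4$ we have $\bea\eta+\beb = \phi(\eta)(\bec\eta+\bed)$ with $\phi(\eta)\in\mu_3$, hence $(\bea\eta+\beb)^3=(\bec\eta+\bed)^3$, so the cubic $(\bea x+\beb)^3-(\bec x+\bed)^3$ vanishes at all three roots of $h_4$ and matches it. This identity rearranges to $x^4+(\bea x+\beb)^3=(x+1)^4+(\bec x+\bed)^3$; call it $f_\phi(x)$. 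That $f_\phi$ has no repeated roots is where I expect the one genuinely delicate point to sit: one must rule out that $\bea x+\beb$ and $\bec x+\bed$ are proportional and, more to the point, that $f_\phi'$ and $f_\phi$ share a root. The honesty of $T_\phi$ (it is a bijection of $\mathbb{P}^1$, not constant) is automatic since $\phi$ is a bijection, so $\bea\bed-\beb\bec\ne 0$; to get separability I would argue directly that a repeated root $x_0$ of $f_\phi$ would force, via the two representations, both $(x_0-0)^4$ and $(x_0+1)^4$ to have multiplicity-$\ge 2$ behavior compatible with $(\bea x+\beb)^3$ resp. $(\bec x+\bed)^3$, and chase the resulting constraints to a contradiction with $\fchar(K)\ne 2,3$; alternatively, quote/adapt the computation implicit in Example~\ref{ex3}, which exhibits that the two actual solutions have no repeated roots, and note the argument is uniform. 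Then by Theorem~\ref{thm1}(iii) the points $P=(0,\beb)$ and $Q=(-1,\bed-\bec)$ (with $v_0(x)=\bea x+\beb$, $v_1(x)=\bec x+\bed$, noting $v_0(0)=\beb\ne 0$ and $v_1(-1)=\bed-\bec\ne 0$ follow from $T_\phi$ being honest and $0,-1\notin\Eta$) have order $4$, giving an element of $\mathcal{F}$.

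Next I would run the inverse map. Given $(f, \mu_3\text{-orbit of }(P,Q))\in\mathcal{F}$, Theorem~\ref{thm1}(iii) produces $v_0(x)=\bea x+\beb$, $v_1(x)=\bec x+\bed$ with $f(x)=x^4+v_0^3=(x+1)^4+v_1^3$, and the representing polynomials are unique up to multiplication by a cube root of $1$ (since $f(x)-(x-x_0)^4$ determines $v^3$, hence $v$ up to $\mu_3$) — and choosing the representative of $v_0$ corresponds precisely to choosing the representative $P$ of its $\mu_3$-orbit, which pins down $\bea,\beb$, and then $v_1$ is forced up to $\mu_3$ but the orbit of $Q$ is all of it. By Lemma~\ref{honest}(ii) (applicable since $\fchar(K)\ne 2,3$ means $\fchar(K)$ divides neither $n-1=3$ nor $n=4$) the map $T(z)=(\bea z+\beb)/(\bec z+\bed)$ is an honest linear fractional transformation with $T(\Eta)=\mu_3$, so restricting gives a bijection $\phi:\Eta\to\mu_3$, i.e.\ an element of $\mathcal{B}$; normalizing so $\bea^3-\bec^3=4$ as in Lemma~\ref{honest}(i) makes $T=T_\phi$. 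Checking that these two assignments compose to the identity in both orders is then bookkeeping: in one direction, uniqueness of the linear fractional transformation through three points; in the other, uniqueness in Theorem~\ref{thm1}(iii). Finally, for (iii), the $\mu_3$-action on $\mathcal{B}$ by $\delta\cdot\phi = (\eta\mapsto\delta\,\phi(\eta))$ is visibly free (if $\delta\phi=\phi$ then $\delta\cdot\phi(\eta)=\phi(\eta)$ for some $\eta$, forcing $\delta=1$), and it corresponds under the above to replacing $(\bea,\beb)$ by $(\delta\bea,\delta\beb)$, i.e.\ replacing $P=(0,\beb)$ by $(0,\delta\beb)=\delta(P)$ while leaving $f$ and $Q$ alone (the identity $x^4+(\delta\bea x+\delta\beb)^3 = x^4+(\bea x+\beb)^3$ uses $\delta^3=1$), which is exactly the stated action on $\mathcal{F}$; this action is free on $\mathcal{F}$ because $\delta(P)=P$ with $P$ of order $4$ (hence $P=(0,c)$, $c\ne 0$) forces $\delta=1$. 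Equivariance of $\mathfrak{B}$ is then immediate from this matching. The main obstacle, as noted, is the separability verification for $f_\phi$; everything else is a uniqueness-of-three-point-transformation argument glued to Theorem~\ref{thm1}(iii) and Lemma~\ref{honest}.
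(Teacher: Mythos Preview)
Your overall architecture matches the paper's: build $\mathfrak{B}$ by passing from $\phi$ to the unique linear fractional $T_\phi$, normalize so that $\bea^3-\bec^3=4$, deduce the identity $(x+1)^4-x^4=(\bea x+\beb)^3-(\bec x+\bed)^3$, hence $f_\phi(x)=x^4+(\bea x+\beb)^3=(x+1)^4+(\bec x+\bed)^3$, and invert via Lemma~\ref{honest}. The $\mu_3$-equivariance check you sketch is correct and agrees with the paper.

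There is, however, a genuine gap exactly where you flag it: the separability of $f_\phi$. Your proposed route (``a repeated root $x_0$ would force multiplicity-$\ge 2$ behaviour in both representations \dots\ chase the constraints'') is not an argument, and Example~\ref{ex3} contains no separability computation to adapt---it simply exhibits the final polynomials. The paper fills this gap by a concrete calculation: the discriminant of $x^4+(\bea x+\beb)^3$ is $\beb^8(256\beb-27\bea^4)$, so a repeated root forces $\beb=0$ or $\beb=\tfrac{27}{256}\bea^4$ (and symmetrically $\bed=\bec$ or $\bed-\bec=\tfrac{27}{256}\bec^4$); the case $\beb=0$ is eliminated by reading off the four coefficient identities in $(x+1)^4-x^4=\bea^3x^3-(\bec x+\bed)^3$ and finding them inconsistent, and the remaining case is eliminated by substituting $A=\bea^3=C+4$, $C=\bec^3$ into the coefficient identities of \eqref{main} and checking that the unique solution $C=-2$ of the quadratic-coefficient equation fails the linear-coefficient equation. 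Without something of this kind, your forward map is not known to land in $\mathcal{F}$.

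There is also a small error in your side claim that $\beb\ne 0$ and $\bed-\bec\ne 0$ ``follow from $T_\phi$ being honest and $0,-1\notin\Eta$''. Honesty of $T_\phi$ and $0\notin\Eta$ only say $T_\phi(0)\notin\mu_3$, which does not preclude $T_\phi(0)=0$, i.e.\ $\beb=0$. The correct reason (and the one the paper uses implicitly) is separability: if $\beb=0$ then $f_\phi(x)=x^3(x+\bea^3)$ has a triple root, and if $\bed=\bec$ then $f_\phi(x)=(x+1)^3(x+1+\bec^3)$ likewise. So these nonvanishing statements are consequences of the separability verification, not independent of it.
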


\begin{proof} Let $\Eta=\{\eta_1, \eta_2, \eta_3\}$ and $\mu_3=\{\eps_1, \eps_2,\eps_3\}$ be the sets of roots of  $(x+1)^4-x^4$ and $x^3-1$  respectively and $\phi: \Eta\to \mu_3$ a bijection. We choose a linear fractional transformation
$$
T(z)=\frac{\bea z+\beb}{\bec z+\bed}
$$
such that $T(\eta_i)=\phi(\eta_i)$.  Such a transformation does exist and is unique.

If we check that we can choose $\bea,\beb,\bec,\bed$ in $K$ so that $\bea^3-\bec^3=4$ and  prove that for
these $\bea,\beb,\bec,\bed$ the polynomial $f_{\phi}(x)=x^4+(\bea x+\beb)^3$ has no repeated roots, then we get the identity
\beq\label{2 packets 2}
(x+1)^4 - x^4=(\bea x+\beb)^3-(\bec x+\bed)^3,
\eeq
and by Theorem \ref{thm1}(iii) we obtain that the points of the curve $$C_{f_{\phi},3}: y^{3}=f_{\phi}(x)$$ with $x$-coordinates $0$ and $-1$  have order 4.
The discriminant of the polynomial
$$f_{\phi}(x)=x^4+(\bea x+\beb)^3 \in K[x], \quad \mathrm{char}(K) \ne 2,3$$ is
$$-27 \bea^4 \beb^8 + 256 \beb^9=\beb^8 (256\beb-27 \bea^4).$$
In other words, $f_{\phi}(x)$ has a multiple root if  and only if  $\beb =0$ or $\beb=\frac{27}{256}\bea^4$.
Suppose that
$$f_{\phi}(x)=(x+1)^4+(\bec x+\bed)^3=(x+1)^4+(\bec(x+1)+(\bed-\bec))^3.$$
Then either
$$\bed-\bec=0, \ \text{i.e.}, \ \bed=\bec$$
or
$$\bed-\bec=\frac{27}{256}\bec^4, \ \text{i.e.}, \ \bed=\bec+\frac{27}{256}\bec^4.$$
We have
\begin{equation}
\label{main}
4x^3+6x^2+4x+1=(x+1)^4-x^4=(\bea x+\beb)^3-(\bec x+\bed)^3.
\end{equation}

Suppose that $\beb=0$. By \eqref{main}  we have
$$4x^3+6x^2+4x+1=(x+1)^4-x^4=(\bea x)^3-(\bec x+\bed)^3.$$
This means that
$$\bea^3-\bec^3=4, \ 3\bec^2 \bed=-6, \  3\bec \bed^2=-4, \ \bed^3=-1.$$
Multiplying (if necessary) $\bec x+\bed$ by a suitable cube root of unity, we may and will assume that
$\bed=-1$. This implies that
$$3 \bec^2=6, \ 3\bec =-4,$$
which could not be the case. Hence, $ \beb \ne 0$. By the same token,
$\bed-\bec \ne 0$, i.e., $\bed \ne \bec$.

So,

$$\beb=\frac{27}{256}\bea^4, \ \bed=\bec+\frac{27}{256}\bec^4.$$
 By \eqref{main}, we have
$$\begin{aligned}4x^3+6x^2+4x+1=(x+1)^4-x^4\\=\left(\bea x+\frac{27}{256}\bea^4\right)^3-\left(\bec x+\bec+\frac{27}{256}\bec^4\right)^3\\
=\bea^3\left(x+\frac{27}{256}\bea^3\right)^3-\bec^3\left(x+1+\frac{27}{256}\bec^3\right)^3\\=
A\left(x+\frac{27}{256}A\right)^3-C\left(x+1+\frac{27}{256}C\right)^3,\end{aligned}$$
where $A=\bea^3$ and $C=\bec^3$.
It follows that
$$A-C=4,$$
\beq \label{eq4}\frac{81}{256} A^2-3C\left(1+\frac{27}{256}C\right)=6,\eeq

\beq\label{eq5}3 \cdot \frac{27^2}{256^2}A^3-3C\left(1+\frac{27}{256}C\right)^2=4.\eeq
 Plugging  $A=C+4$ in \eqref{eq4}, we get
\beq\label{eq101}\frac{81}{256} (C+4)^2-3C\left(1+\frac{27}{256}C\right)=6,\eeq
\beq \label{eq103}\frac{81}{256}C^2+\frac{81}{32}C+\frac{81}{16}-3C-  \frac{81}{256}C^2=6.\eeq
Plugging  $A=C+4$ in \eqref{eq5}, we get
\beq\label{eq102}3 \cdot \frac{27^2}{256^2}(C+4)^3-3C\left(1+\frac{27}{256}C\right)^2=4,\eeq
Equation \eqref{eq103} has obviously a unique solution $C=-2$ that does not satisfy \eqref{eq102}.

So for each bijection $\phi:\Eta\to \textrm{E}$ we have constructed a quartic monic polynomial $f_{\phi}(x)$ with required properties.
We also constructed the  pair of polynomial $v_0(x)=\bea x+\beb, \ v_1(x)=\bec x+\bed$, which is determined up to multiplication by an element of $\mu_3$.
They give rise to the points of order $4$
$$P=(0, v_0(0))=(0,\beb), \ Q=(-1,\bed-\bec) \in \mathcal{C}_{f,3}(K)$$
with abscissas $0$ and $-1$ respectively. Replacing $\left((v_0(x),v_1(x)\right)$ by $\left(\gamma v_0(x),\gamma v_1(x)\right)$ (where $\gamma \in \mu_3$)
gives us the pair $\gamma(P), \gamma(Q)$  of points of order 4 with abscissas $0$ and $-1$ respectively.
In other words, $\phi$ gives rise to the $\mu_3$-orbit of pairs $(P,Q)$ of points of order $4$ with abscissas $0$ and $-1$ respectively.
Now let us define $\mathfrak{B}(\phi)$ as the element of $\mathcal{F}$ that consists of $f_{\phi}(x)$ and the $\mu_3$-orbit of $(P,Q)$.

For each $\delta \in \mu_3$ the bijection
$$\delta(\phi):  \eta \mapsto \delta\cdot  \phi(\eta)$$
coincides with the restriction to $\Eta$ of the linear fractional map
$$\delta T: T(z)=\frac{\delta(\bea z+\beb)}{(\bec z+\bed)}=\frac{\delta\bea z+\delta\beb}{\bec z+\bed}.$$
This map gives us (actually the same) the  quartic
polynomial
$$f_{\delta (\phi)}x^4+(\delta(\bea z+\beb))^3=x^4+(\bea z+\beb)^3=f_{\phi}(x).$$
On the other hand, the corresponding pair of order $4$ points is $(\delta(P),Q)$.
This proves that the map $\mathfrak{B}$ is $\mu_3$-equivariant.

Conversely, suppose we are given a   monic quartic polynomial $f(x)\in K[x]$ without repeated roots
and points
$$P=(0, c_0), \ Q=(-1, c_1) \in \mathcal{C}_{f,3}(K)$$ of order 4.
 Then by Theorem \ref{thm1}(iii)
 $$f(x)=x^4+(\bea x+\beb)^3=(x+1)^4+(\bec x+\bed)^3, \quad  c_0=\beb, \ c_1=\bed-\bec$$
 for some $\bea,\beb,\bec,\bed\in K$. It follows that the coefficient of $f(x)$ at $x^3$ is
 $\bea^3=4 +\bec^3$. This implies that
 $$\bea^3-\bec^3=4.$$
 By Lemma \ref{honest}, $\bea\bed-\beb\bec \ne 0$ and if we consider the
 linear fractional transformation
 $$T: \mathbb{P}^1(K) \to \mathbb{P}^1(K) , \quad z \mapsto \frac{\bea z+\beb}{\bec z+\bed},$$
 then $T(\Eta)= \mu_3$. In particular, $T$ induces a bijection
 $$\phi_{f,P,Q}:\Eta\to \mu_3, \quad \eta \mapsto T(\eta).$$
 If we replace the pair $(P,Q)$ by any $(\gamma(P),\gamma(Q))$  (where $\gamma\in \mu_3$),
 then the corresponding linear fractional transformation will be (actually the same)
 $$z \mapsto \frac{\gamma(\bea z+\beb)}{\gamma(\bec z+\bed)}=T(z).$$
 This implies that the bijection $\phi_{f,P,Q}$ actually depends only on $f(x)$ and the $\mu_3$-orbit of $(P,Q)$.
 This proves that $\mathfrak{B}$ is a {\sl bijection}.

 It follows from our considerations that  each $\mu_3$-orbit in $\mathcal{B}$ gives the same quartic polynomial while distinct
 orbits give rise to distinct quartic polynomials. Since the cardinality of  $\mathcal{B}$  is $6$, we obtain that there are precisely two quartic polynomials
 $f(x)$ without repeated roots such that all the points on $\mathcal{C}_{f,3}$ with abscissas $0$ and $-1$ have order $4$.
 Now one should find explicitly  linear fractional transformations that send $\Eta$ to $\mu_3$, which can be done
 by direct calculations. As a result, we get explicit formulas for such $f(x)$.
\end{proof}

\begin{proof}[Proof of Lemma \ref{honest}]
 Let us put
$$f(x):=f_{\bea,\beb;\bec,\bed}(x)=x^n+(\bea x+\beb)^{n-1}=(x+1)^n +(\bec x+\bed)^{n-1}.$$
Then the constant term of $f(x)$ is
$\beb^{n-1}=\bed^{n-1}$. On the other hand, the coefficient of $f(x)$ at $x^{n-1}$ is
$a^{n-1}=n+\bec^{n-1}$. This proves (i).
It also follows that either $\beb=\bed=0$ or $\beb\ne 0, d \ne 0$.
If $\beb=0$, then
$f(x)=x^n+(\bea x)^{n-1}$ has a root $0$ of multiplicity $\ge n-1>1$, which is not true.
Hence,
$$\beb\ne 0, \ \bed \ne 0.$$
This ends the proof of (i).

Let us start the proof of (ii). Suppose that $\bea \bed-\beb \bec=0$, i.e., $\bea \bed=\beb \bec$.
We need to arrive to a contradiction.) Clearly,
$\bea^{n-1} \bed^{n-1}=\beb^{n-1} \bec^{n-1}$. It follows that
$$(n+\bec^{n-1})\beb^{n-1}=\beb^{n-1} \bec^{n-1},$$
i.e.,   $n \beb^{n-1}=0 \in K$. Since neither $n$ nor $\beb$ are zeros in $K$, we get the desired contradiction.  So,
$$\bea \bed-\beb \bec \ne 0$$
and
$$T: \mathbb{P}^1(K) \to  \mathbb{P}^1(K), \quad z \mapsto \frac{\bea z+\beb}{\bec z+\bed}$$
is a bijective self-map.

Let us consider  the polynomial
$$h_n(x):=(x+1)^n-x^n=(\bea x+\beb)^{n-1}-(\bec x+\bed)^{n-1}.$$
Since $\fchar(K)$ does not divide $n$, one may easily check that
$h_n(x)$
is a degree $(n-1)$ polynomial
with leading coefficient $n$
that has $n-1$ distinct roots
$$\eta(\gamma)=\frac{1}{\gamma-1}, \quad  \text{ where } \gamma\in \mu_n\subset K, \ \gamma \ne 1$$
(compare with \cite[Sect. 6.1]{BZ}); notice that none of these roots is $0$.
 Since $n \ge 3$, $h_n(x)$ has more than one roots.

Recall that $\fchar(K)$ does not divide $n-1$, so the multiplicative group $\textrm{E}_n=\mu_{n-1}\subset K$
of $(n-1)$th roots of unity consists of $(n-1)$ elements. We have
$$\begin{aligned}\prod_{\eta\in \Eta_n}(x-\eta)&=(x+1)^n-x^n=(\bea x+\beb)^{n-1}-(\bec x+\bed)^{n-1}\\&=\prod_{\gamma \in \textrm{E}_n}((\bea x+\beb)-\gamma (\bec x+\bed)).\end{aligned}$$
This implies that for each $\eta\in \Eta_n$ there exists $\gamma \in  \textrm{E}_n$ such that
$$(\bea \eta+\beb)-\gamma(\bec \eta+\bed)=0,$$ i.e.,
$$(\bea \eta+\beb)=\gamma(\bec\eta+\bed).$$
We claim that $\bec \eta+\bed\ne 0$. Indeed, if $\bec \eta+\bed=0$, then $\bea \eta+\beb=\gamma \cdot 0=0$. So,
$$ \bea \eta+\beb=\bec \eta+\bed=0,$$
which contradicts  the inequalities
$$\bea \bed-\beb \bec \ne 0, \ \eta \ne 0.$$
So, $\bec \eta+\bed\ne 0 \ne 0$ and therefore
$$\gamma=\frac{\bea \eta+\beb}{\bec \eta+d}=T(\gamma).$$
This implies that $T(\Eta_n)\subset \textrm{E}_n$. Since both finite sets $\Eta_n$ and $\textrm{E}_n$ have the same number of elements
(namely, $n-1$) and $T$ is injective, we get
$$T(\Eta_n)= \textrm{E}_n,$$
which ends the proof of  (ii).
\end{proof}

\begin{proof}[Proof of Main Theorem]
The desired result is an immediate corollary of Theorems \ref{thm1},  \ref{thm3}, \ref{thm40}, \ref{4,3}.

\end{proof}

\vskip .2cm
{\bf Conflict of Interest statement }  Not Applicable


\begin{thebibliography}{99}

\bibitem{Arul} V. Arul (2020) {\sl Explicit Division and Torsion Points on Superelliptic Curves and Jacobians} [Doctoral dissertation, MIT].


\bibitem{BZ} B.M. Bekker, Yu.G. Zarhin, {\sl Torsion Points of order ${2g+1}$ on odd degree hyperelliptic curves of genus $g$}.
Trans. Amer. Math. Soc. {\bf 373} (2020), 8059–8094.




\bibitem{Box1} J. Boxall and D. Grant, {\sl Examples of torsion points on genus two curves}.  Trans. Amer. Math. Soc. {\bf 352} (2000),  no. 10, 4533--4555.

\bibitem{Box2} J. Boxall, D. Grant, and F. Lepr\'evost, {\sl 5-torsion points on curves of genus 2}. J. Lond. Math. Soc. (2) {\bf 64} (2001), 29--43.

\bibitem{Gal}S. D. Galbraith, S. M. Paulus, and N. P. Smart, {\sl Arithmetic on superelliptic curves}.  Mathematics of  Computation {\bf  71:237} (2000),  393--405.

\bibitem{Raynaud} M. Raynaud, {\sl Courbes sur une vari\'et\'e abeliennes et points de torsion}. Invent. Math. {\bf 71:1} (1983), 207--233.

\bibitem{Ribet} K. Ribet and Minhyong Kim, {Torsion points on modular curves and Galois Theory},                                                                            arXiv:math/0305281v1.
\bibitem{Tsermias} P. Tsermias, {\sl The Manin–Mumford Conjecture: A Brief Survey}. Bull.   Lond. Math. Soc., Vol. 32, Issue 6,  (2000),  641--652.

\bibitem{Zarhin} Yu. G. Zarhin, {\sl Division by $2$ on odd-degree hyperelliptic curves and their jacobians}. Izvestiya  RAN Ser. Mat. {\bf 83:3} (2019), 83--102;
Izvestiya Mathematics {\bf 83:3} (2019), 501--520.
\end{thebibliography}
\end{document}